\newtheorem{theorem}{Theorem}[section]
\newtheorem{lemma}[theorem]{Lemma}
\newtheorem{prop}[theorem]{Proposition}
\newtheorem{cor}[theorem]{Corollary}
\theoremstyle{definition}
\newtheorem{definition}[theorem]{Definition}
\newtheorem{example}[theorem]{Example}
\newtheorem{conj}[theorem]{Conjecture}
\theoremstyle{remark}
\newtheorem{remark}[theorem]{Remark}
\numberwithin{equation}{section}
\newcommand{\set}[2]{{\{ {#1}\ |\ {#2}\}}}
\newcommand{\RR}{{\mathbb R}}
\newcommand{\CC}{{\mathbb C}}
\newcommand{\ZZ}{{\mathbb Z}}
\newcommand{\Rfour}{{{\mathbb R}^4}}
\newcommand{\GL}{{\rm GL}}
\newcommand{\SL}{{\rm SL}}
\newcommand{\SO}{{\rm SO}}
\newcommand{\Spin}{{\rm Spin}}
\newcommand{\Arf}{{\mathrm{Arf}}}
\newcommand{\id}{{\rm id}}
\newcommand{\Aut}{{\rm Aut}}
\newcommand{\Mod}{{\rm Mod}}
\newcommand{\diff}{{\mathtt{Diff}}}
\newcommand{\topo}{{\mathtt{Top}}}
\newcommand{\pl}{{\mathtt{PL}}}
\newcommand{\cat}{{\mathcal{C}}}
\newcommand{\mcg}{{\mathrm{MCG}}}
\newcommand{\esg}{{\mathscr{E}}}
\begin{document}

\title{Spin structures and codimension-two homeomorphism extensions}
\author{Fan Ding, Yi Liu,  Shicheng Wang, Jiangang Yao}
\maketitle

\begin{abstract}
Let $\imath: M\to \RR^{p+2}$ be a smooth embedding
from a connected, oriented, closed $p$-dimesional 
smooth manifold to $\RR^{p+2}$, 
then there is a spin structure $\imath^\sharp(\varsigma^{p+2})$ on $M$
canonically induced from the embedding. If an
orientation-preserving diffeomorphism $\tau$ of $M$ extends over
$\imath$ as an orientation-preserving topological homeomorphism of
$\RR^{p+2}$, then $\tau$ preserves the induced spin structure.

Let $\esg_\cat(\imath)$ be the subgroup of the $\cat$-mapping class group
$\mcg_\cat(M)$ consisting of elements whose representatives extend over $\RR^{p+2}$
as orientation-preserving $\cat$-homeomorphisms, where $\cat=\topo$, $\pl$ or $\diff$.
The invariance of $\imath^\sharp(\varsigma^{p+2})$ gives nontrivial 
lower bounds to $[\mcg_\cat(M):\esg_\cat(\imath)]$ in various special cases.
We apply this to embedded surfaces in $\RR^4$ and embedded $p$-dimensional tori 
in $\RR^{p+2}$. In particular, in these cases the index lower bounds for $\esg_\topo(\imath)$
are achieved for unknotted embeddings.
\end{abstract}


\section{Introduction}

Let $M$ be a connected, oriented, closed $p$-dimensional smooth manifold, and
$\imath:M\hookrightarrow \RR^{p+2}$ be a smooth embedding. We are concerned with 
the question: `how many mapping classes of $M$ extend over $\RR^{p+2}$?'
Regarding to different possible flavors of this question, we shall
write $\topo$ (resp. $\pl$, or $\diff$) for the category
of topological (resp. PL, or smooth) manifolds
with continuous (resp. PL, or smooth) maps 
as morphisms, and generally write $\cat$ for any of these categories. We
speak of $\cat$-manifolds, $\cat$-homeomorphisms, $\cat$-isotopies, 
etc. in the usual sense.

With notations above, denote $\mcg_\cat(M)=\pi_0\,{\rm Homeo}^+_\cat(M)$ for the $\cat$-mapping-class-group of $M$, i.e. the group of $\cat$-isotopy classes of orientation-preserving $\cat$-self-homeomorphisms on $M$. A class $[\tau]\in\mcg_\cat(M)$ is called \emph{$\cat$-extendable} over $\imath$ if for some (hence any, cf. Lemma \ref{extendTogether}) representative $\tau$, there is an orientation-preserving $\cat$-self-homeomorphism $\tilde\tau$ of $\RR^{p+2}$ such that $\imath\circ\tau=\tilde{\tau}\circ\imath$.
We define the \emph{$\cat$-extendable subgroup} with respect to $\imath$ as:
$$\esg_\cat(\imath)=\set{[\tau]\in\mcg_\cat(M)}{\text{$\tau$ is $\cat$-extendable over $\imath$.}}.$$

Now the question makes sense by asking what is the index of $\esg_\cat(\imath)\leq\mcg_\cat(M)$. In this paper, we prove the following criterion.

Let $\varsigma^{p+2}$ by the canonical spin structure on $\RR^{p+2}$.
For any smooth embedding $\imath: M\to \RR^{p+2}$, there is a canonically induced spin structure 
$\imath^\sharp(\varsigma^{p+2})$ on $M$ (Definition \ref{inducedSpin}).

\begin{prop}\label{spin-obstruction-top}
For any smooth embedding $\imath: M\to \RR^{p+2}$, the induced spin structure 
$\imath^\sharp(\varsigma^{p+2})$ is on
$M$ is null spin-cobordant, and is invariant under any orientation-preserving self-diffeomorphism of $M$ which extends
over $\imath$  as an orientation-preserving topological self-homeomorphism of $\RR^{p+2}$.
\end{prop}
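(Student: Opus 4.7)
The statement has two parts, which I would handle separately.

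\textbf{Null spin-cobordism.} The key geometric input is that any closed oriented codimension-two submanifold of $\RR^{p+2}$ bounds an oriented codimension-one submanifold (a Seifert hypersurface), which can be produced by obstruction theory on the oriented normal bundle or, equivalently, by the Pontryagin-Thom construction applied to the $S^1$-valued map dual to $M$. Let $W\subset\RR^{p+2}$ be such a hypersurface with $\partial W=M$. Since $W$ has codimension one in $\RR^{p+2}$, its normal line bundle is canonically trivialized, so a construction parallel to Definition~\ref{inducedSpin} (adapted to codimension one) yields a canonical spin structure on $W$ whose restriction to $\partial W=M$ provides the desired null cobordism. What remains is to check that this boundary spin structure coincides with $\imath^\sharp(\varsigma^{p+2})$. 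This amounts to comparing two reductions of the restricted frame bundle $T\RR^{p+2}|_M \cong TM\oplus\nu_{M\subset W}\oplus\nu_{W\subset\RR^{p+2}}|_M$, each using canonical trivializations of the rank-one normal summands; it should reduce to a diagrammatic consistency check at the level of principal $\Spin$-bundles.

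\textbf{Invariance.} I would appeal to naturality of the $^\sharp$ construction. From $\imath\circ\tau=\tilde\tau\circ\imath$ and compositional pullback of spin structures, one obtains
\[
\tau^\sharp\big(\imath^\sharp(\varsigma^{p+2})\big)
= (\imath\circ\tau)^\sharp(\varsigma^{p+2})
= (\tilde\tau\circ\imath)^\sharp(\varsigma^{p+2})
= \imath^\sharp\big(\tilde\tau^\sharp(\varsigma^{p+2})\big).
\]
Because $\RR^{p+2}$ is simply connected, its spin structure is unique up to isomorphism, so $\tilde\tau^\sharp(\varsigma^{p+2})=\varsigma^{p+2}$, proving invariance.

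\textbf{Main obstacle.} The delicate point is that $\tilde\tau$ is only a topological homeomorphism, whereas $\imath^\sharp$ was set up with smooth embeddings in mind. The third equality above, although a tautology at the level of underlying maps (both composites agree as a single map $M\to\RR^{p+2}$), must be reinterpreted at the level of induced spin structures in a category in which $\tilde\tau$ can act. My plan is to factor $\imath^\sharp$ through the topological normal microbundle of $\imath(M)\subset\RR^{p+2}$, which $\tilde\tau$ manifestly respects, and to invoke the canonical bijection between smooth and topological spin structures on a smooth manifold to identify the outcome with the original smooth $\imath^\sharp(\varsigma^{p+2})$. Once naturality of $^\sharp$ is seen to extend to orientation-preserving topological ambient homeomorphisms, the displayed chain completes the invariance argument verbatim.
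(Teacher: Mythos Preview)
Your treatment of the null spin-cobordism is essentially the paper's: pick a smooth Seifert hypersurface $\Sigma$, use the codimension-one normal to give $\Sigma$ a spin structure, and check that the induced boundary spin structure is $\imath^\sharp(\varsigma^{p+2})$. That part is fine.

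For the invariance, however, your approach diverges substantially from the paper's and has a real gap at exactly the point you flag. The paper does \emph{not} attempt to make $\imath^\sharp$ natural in the topological category. Instead it argues as follows: using uniqueness of topological normal bundles for codimension-two locally flat embeddings (Kirby--Siebenmann in ambient dimension $\geq 5$, Freedman--Quinn in dimension $4$) together with the homological argument of Lemma~\ref{independent}, one may isotope $\tilde\tau$ so that on a tubular neighborhood $\mathcal{N}\cong M\times D^2$ it is $\tau\times\id_{D^2}$. One then forms two smooth closed $(p+2)$-manifolds $Y_\id=X\cup_\id(-X)$ and $Y_\tau=X\cup_\tau(-X)$, where $X=S^{p+2}\setminus\mathring{\mathcal{N}}$. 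These are \emph{homeomorphic} via $\tilde\tau\cup\id$, and $Y_\id$ embeds in $S^{p+3}$, so $w_2(Y_\id)=0$; by Wu's theorem (homotopy invariance of Stiefel--Whitney classes) $w_2(Y_\tau)=0$ as well. Finally, if $\tau$ failed to preserve $\imath^\sharp(\varsigma^{p+2})$, one builds an immersed closed surface in $Y_\tau$ detecting $w_2(Y_\tau)\neq 0$, a contradiction. The passage from $\topo$ to $\diff$ is thus effected by a single classical fact (Wu) rather than by any theory of topological spin structures.

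Your proposed route---make sense of $\tilde\tau^\sharp(\varsigma^{p+2})$ via topological tangent microbundles and the bijection between smooth and topological spin structures---is not obviously wrong, but as written it is only a programme. To carry it out you would need (i) existence and uniqueness of a topological normal disk bundle for $\imath(M)\subset\RR^{p+2}$ (this is exactly the Kirby--Siebenmann/Freedman--Quinn input the paper uses), (ii) a topological version of Lemma~\ref{independent} showing that the Seifert framing of that normal bundle is homeomorphism-invariant, and (iii) a careful identification of the ``topological $\imath^\sharp$'' with the smooth one. Step (iii) rests on the $2$-connectivity of $\mathrm{TOP}/\mathrm{O}$, which is deep Kirby--Siebenmann theory; and once you have (i) and (ii) you are already most of the way to the paper's argument. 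In short, your naturality chain hides all the content in the third equality, and the microbundle factorisation you propose does not come for free---it requires at least as much input as the paper's doubling argument, and rather more if you insist on genuinely working with topological spin structures. The paper's approach is both more elementary and more self-contained.
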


In fact, $\imath^\sharp(\varsigma^{p+2})$ is naturally induced as the boundary of a spin structure on a smooth Seifert hypersurface $\Sigma$ of $\imath(M)$. Proposition \ref{spin-obstruction-top} allows us to find nontrivial lower bounds of $[\mcg_\cat(M):\esg_\cat(\imath)]$ in certain cases. We may even compute the index for some specific embeddings. In this paper, we apply the criterion to smoothly embedded surfaces in $\RR^4$ and certain smoothly embedded $p$-dimensional torus in $\RR^{p+2}$.

\begin{theorem}\label{main-extend-F_g}
For any smooth embedding $\imath: F_g\hookrightarrow \Rfour$ of the closed oriented
surface of genus $g$ into $\Rfour$,
$$[{\mcg}_\topo(F_g):\esg_\topo(\imath)]\geq 2^{2g-1}+2^{g-1}.$$
\end{theorem}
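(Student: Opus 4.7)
The plan is to leverage Proposition \ref{spin-obstruction-top} to force $\esg_\topo(\imath)$ into the stabilizer of the induced spin structure $\imath^\sharp(\varsigma^4)$, and then to bound the index of this stabilizer by an orbit count. I would identify spin structures on $F_g$ with quadratic refinements $q:H_1(F_g;\ZZ/2)\to\ZZ/2$ of the mod-$2$ intersection form, giving a set of cardinality $2^{2g}$ on which $\mcg_\topo(F_g)$ acts through the symplectic representation $\rho:\mcg_\topo(F_g)\to\mathrm{Sp}(2g,\ZZ/2)$. Since the spin cobordism group $\Omega_2^{\mathrm{Spin}}\cong\ZZ/2$ is detected by the Arf invariant (classical, due to Milnor), the null spin-cobordism conclusion of Proposition \ref{spin-obstruction-top} is equivalent to $\Arf(\imath^\sharp(\varsigma^4))=0$.

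Next I would recall that quadratic refinements with $\Arf=0$ form a single $\mathrm{Sp}(2g,\ZZ/2)$-orbit, of cardinality $2^{g-1}(2^g+1)=2^{2g-1}+2^{g-1}$. The representation $\rho$ is surjective onto $\mathrm{Sp}(2g,\ZZ/2)$ (for instance, Dehn twists about nonseparating simple closed curves realize the symplectic transvections generating the target), so the $\mcg_\topo(F_g)$-orbit of $\imath^\sharp(\varsigma^4)$ is precisely this set of $2^{2g-1}+2^{g-1}$ Arf-zero spin structures. Here I implicitly use that $\mcg_\topo(F_g)\cong\mcg_\diff(F_g)$ for closed orientable surfaces, which is classical and ensures that the action of the topological mapping class group on smooth spin structures is well-defined and agrees with the diffeomorphism action.

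Combining these observations with Proposition \ref{spin-obstruction-top} gives the inclusion $\esg_\topo(\imath)\subseteq\mathrm{Stab}_{\mcg_\topo(F_g)}\bigl(\imath^\sharp(\varsigma^4)\bigr)$, so the orbit-stabilizer relation yields
\[
[\mcg_\topo(F_g):\esg_\topo(\imath)] \;\geq\; \bigl[\mcg_\topo(F_g):\mathrm{Stab}\bigl(\imath^\sharp(\varsigma^4)\bigr)\bigr] \;=\; 2^{2g-1}+2^{g-1},
\]
which is the claimed bound.

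I do not expect any serious obstacle here: given Proposition \ref{spin-obstruction-top}, the argument is essentially bookkeeping on a symplectic $\ZZ/2$-vector space. The only items worth care are the identification of null spin-cobordism with vanishing Arf via $\Omega_2^{\mathrm{Spin}}\cong\ZZ/2$, and the standard enumeration $2^{g-1}(2^g+1)$ of Arf-zero quadratic refinements; both are classical. No further geometric input beyond Proposition \ref{spin-obstruction-top} is required.
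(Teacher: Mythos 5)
Your argument is correct, and its overall skeleton is the same as the paper's: Proposition \ref{spin-obstruction-top} places $\esg_\topo(\imath)$ inside the stabilizer of the induced (bounding) spin structure, and the bound follows from transitivity of the mapping class group action on the Arf-zero spin structures together with their count $2^{2g-1}+2^{g-1}$. Where you differ is in how the transitivity and the count are obtained. The paper proves both from scratch: Lemma \ref{cardBU} computes $b_g$ by an induction on genus via connected sums, and Lemma \ref{actionBU} establishes transitivity of the $\mcg_\cat(F_g)$-action on $\mathcal{B}_g$ by a genus-induction argument manipulating connect-sum decompositions and curve representatives. You instead route everything through the symplectic representation: the action on spin structures factors through $\mathrm{Sp}(2g,\ZZ_2)$ via the quadratic refinements $q_\sigma$, the representation $\mcg_\topo(F_g)\to\mathrm{Sp}(2g,\ZZ_2)$ is surjective (Dehn twists give transvections), and the classification of quadratic forms over $\ZZ_2$ says the Arf-zero refinements form a single $\mathrm{Sp}(2g,\ZZ_2)$-orbit of size $2^{g-1}(2^g+1)$. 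This is exactly the point of view the paper itself adopts later, in the proof of Corollary \ref{noExtension} (where the action is observed to descend to $\Gamma\cong\mathrm{Sp}(2g,\ZZ_2)$), so your proof is a legitimate shortcut: it buys brevity at the cost of invoking classical external facts (Arf classification, surjectivity onto $\mathrm{Sp}(2g,\ZZ_2)$, and the identification $\Omega_2^{\mathrm{Spin}}\cong\ZZ_2$ via Arf), whereas the paper's Lemmas \ref{cardBU} and \ref{actionBU} keep the argument self-contained and are stated uniformly for all categories $\cat$. You also correctly flag the one genuinely necessary bridging point, namely that classes in $\mcg_\topo(F_g)$ have smooth representatives so that Proposition \ref{spin-obstruction-top} (stated for diffeomorphisms extending topologically) applies to every element of $\esg_\topo(\imath)$; this matches the first line of the paper's proof.
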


\begin{remark} In principle, one should be able to 
derive the smooth version that $[\mcg_\diff(F_g):\esg_\diff(\imath)]\geq
2^{2g-1}+2^{g-1}$ from the invariance of the Rokhlin
 quadratic form (\cite{Ro}). However, this, also the PL version, 
is immediately implied by Theorem \ref{main-extend-F_g} 
as the $\mcg_\cat(F_g)$'s are canonically isomorphic for $\cat$ being $\diff$, $\pl$, and $\topo$. The lower bound in Theorem \ref{main-extend-F_g}
 is sharp for any unknotted embedding of $F_g$ in $\Rfour$, namely
which bounds a smoothly embedded handlebody of genus $g$ in $\Rfour$, following from an
intensive construction of Susumu Hirose (\cite{Hi}, cf. also (\cite{Mo} for $g=1$).
See Section \ref{Sec-F_g} for details.
\end{remark}

Another interesting fact follows from the proof of Theorem \ref{main-extend-F_g}.

\begin{cor}\label{noExtension} For any $g\geq 1$, there exists $[\tau]\in\mcg_\topo(F_g)$ which is not homeomorphically extendable over any smooth embedding $\imath:F_g\hookrightarrow\RR^4$. \end{cor}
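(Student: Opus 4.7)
The plan is to convert Proposition~\ref{spin-obstruction-top} into a constraint on the fixed sets of the natural action of $\mcg_\topo(F_g)$ on spin structures, and then to find a class fixing none of the relevant spin structures. Write $\mathrm{Spin}(F_g)$ for the set of spin structures on $F_g$. Since $\mcg_\topo(F_g)\cong\mcg_\diff(F_g)$, every class $[\tau]\in\mcg_\topo(F_g)$ admits a diffeomorphism representative and therefore acts on $\mathrm{Spin}(F_g)$. By Proposition~\ref{spin-obstruction-top}, if $[\tau]$ is homeomorphically extendable over some smooth embedding $\imath\colon F_g\hookrightarrow\RR^4$, then $[\tau]$ fixes $\imath^\sharp(\varsigma^4)\in\mathrm{Spin}(F_g)$. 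It therefore suffices to exhibit some $[\tau]$ acting without fixed points on the set
\[
S \;=\; \bigl\{\,\imath^\sharp(\varsigma^4)\ :\ \imath\colon F_g\hookrightarrow\RR^4\text{ smooth}\,\bigr\}\;\subseteq\;\mathrm{Spin}(F_g).
\]

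My first step would be to identify $S$ explicitly. Proposition~\ref{spin-obstruction-top} tells us that every element of $S$ has vanishing Arf invariant. Conversely, $S$ is invariant under the $\mcg_\diff(F_g)$-action: for any self-diffeomorphism $\phi$ of $F_g$, the composition $\imath\circ\phi^{-1}$ is again a smooth embedding whose induced spin structure is $\phi_*(\imath^\sharp(\varsigma^4))$. Since smooth embeddings of $F_g$ into $\RR^4$ exist (for instance the unknotted ones) and the mapping class group is classically known to act transitively on the spin structures of each fixed Arf invariant, $S$ coincides with the full set of Arf-zero spin structures, a set of cardinality $N=2^{2g-1}+2^{g-1}$.

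Once $S$ has been identified, the remaining task is purely combinatorial: produce a derangement in the transitive image of $\mcg_\topo(F_g)$ inside $\mathrm{Sym}(S)$. Because $N\geq 3$ for every $g\geq 1$, the classical observation applies: any finite transitive permutation group on $N\geq 2$ letters contains a fixed-point-free element, since the $N$ conjugate point-stabilizers each have index $N$ and all share the identity, and therefore cannot cover the ambient group. Any $[\tau]\in\mcg_\topo(F_g)$ lifting such a derangement fixes no element of $S$ and hence cannot extend topologically over any smooth embedding. The main obstacle in the plan is the characterization of $S$, which requires both its $\mcg$-invariance and the classical transitivity of the $\mcg$-action on Arf classes; after that the group theory is elementary.
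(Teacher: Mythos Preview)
Your proposal is correct and follows essentially the same approach as the paper: both reduce via Proposition~\ref{spin-obstruction-top} to finding a mapping class fixing no bounding spin structure, and both invoke the elementary counting argument that a finite group acting transitively on $N\geq 2$ points contains a derangement (the union of the $N$ conjugate point-stabilizers cannot cover the group). The paper passes to the finite quotient $\mathrm{Sp}(2g,\ZZ_2)$ rather than the image in $\mathrm{Sym}(S)$, and it skips your identification $S=\mathcal{B}_g$, which is correct but unnecessary since a derangement on all of $\mathcal{B}_g\supseteq S$ already suffices.
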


Denote the standard $p$-dimensional smooth torus $S^1\times\cdots\times S^1\,(p\textrm{ copies})$ as $T^p$. The structure of $\mcg_\cat(T^p)$ is fairly well-understood except for $p=4$, thank to the work of Allen Hatcher et al, (see Section \ref{Sec-T^p} for details), which makes the following theorem attainable.

\begin{theorem}\label{main-extend-T^p} For $p\geq 1$, suppose 
$\imath:T^p\hookrightarrow\RR^{p+2}$ is a smooth embedding whose induced spin structure $\imath^\sharp(\varsigma^{p+2})$
on $T^p$ is not the Lie-group spin structure, then:
$$[\mcg_\topo(T^p):\esg_\topo(\imath)]\geq 2^p-1.$$
Moreover, the lower bound is realized by \emph{unknotted} embeddings (Definition \ref{ukT}).
\end{theorem}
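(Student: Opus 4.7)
The statement combines a lower bound on the index with a sharpness claim for unknotted embeddings; I address them in turn. The lower bound is a direct consequence of Proposition~\ref{spin-obstruction-top}: every $[\tau]\in\esg_\topo(\imath)$ fixes the induced spin structure $\imath^\sharp(\varsigma^{p+2})$, so $\esg_\topo(\imath)$ is contained in the $\mcg_\topo(T^p)$-stabilizer of this structure, and therefore
$$[\mcg_\topo(T^p):\esg_\topo(\imath)]\ \geq\ \bigl|\mcg_\topo(T^p)\cdot\imath^\sharp(\varsigma^{p+2})\bigr|.$$
It suffices to show this orbit has size exactly $2^p-1$.

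To compute the orbit, I would identify the set of spin structures on $T^p$ with $H^1(T^p;\ZZ/2)\cong(\ZZ/2)^p$ as an affine space, using the Lie-group spin structure $\sigma_L$ as origin. The structure $\sigma_L$ comes from the bi-invariant framing of $T^p$ and is hence preserved by every linear self-diffeomorphism; since the $\mcg_\topo(T^p)$-action on spin structures factors through the action on $H^1(T^p;\ZZ/2)$, this origin is MCG-invariant and the action on $(\ZZ/2)^p$ becomes linear, given through the composition
$$\mcg_\topo(T^p)\twoheadrightarrow\SL(p,\ZZ)\twoheadrightarrow\SL(p,\ZZ/2)=\GL(p,\ZZ/2),$$
where the second map is reduction mod $2$ (surjective because elementary matrices lift). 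Because $\GL(p,\ZZ/2)$ acts transitively on the $2^p-1$ nonzero vectors of $(\ZZ/2)^p$, and the hypothesis $\imath^\sharp(\varsigma^{p+2})\neq\sigma_L$ places our spin structure among them, the orbit has size exactly $2^p-1$, proving the lower bound.

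For the sharpness claim, I assume per Definition~\ref{ukT} that an unknotted $\imath$ presents $T^p$ as the boundary of a standardly embedded solid torus $V\cong T^{p-1}\times D^2\subset\RR^{p+2}$. The strategy is to realize the full stabilizer of $\imath^\sharp(\varsigma^{p+2})$ in $\mcg_\topo(T^p)$ by ambient homeomorphisms of $\RR^{p+2}$. Natural sources are: (i) $\SL(p-1,\ZZ)$-automorphisms of $T^{p-1}$, extended over $V$ by the product structure and then over $\RR^{p+2}$ by support in a tubular neighborhood of $V$; (ii) a meridional Dehn twist along $\{*\}\times\partial D^2\subset T^p$, realized by a $2\pi$-rotation of the $D^2$ factor in a collar of $V$; (iii) swap homeomorphisms exchanging $V$ with a complementary unknotted solid torus in the one-point compactification $S^{p+2}$. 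A direct calculation of the images of these generators in $\GL(p,\ZZ/2)$ should show that together they surject onto the stabilizer of $\imath^\sharp(\varsigma^{p+2})$, forcing the index to equal $2^p-1$ and matching the lower bound.

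The main obstacle lies in the sharpness statement: the lower bound is essentially an orbit--stabilizer count, routine given the well-understood action of $\mcg_\topo(T^p)$ on $H_1$, while the realization requires explicit ambient constructions and a verification that their images exhaust the stabilizer. The extensions in (i) and (ii) are more-or-less routine; the swap homeomorphisms of (iii) are the delicate point, relying on the specific geometry of the unknotted complement. For $p=4$ there is the further wrinkle that $\mcg_\topo(T^4)$ is not completely understood, but only its image in $\GL(4,\ZZ/2)$ enters the spin action, so the argument should bypass this gap without requiring a full description of $\mcg_\topo(T^4)$.
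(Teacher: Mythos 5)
Your lower-bound argument is correct and is essentially the paper's own (Lemma \ref{actionT^p} combined with Proposition \ref{spin-obstruction-top}): the Lie-group structure is an $\mcg$-fixed origin, the action on $\mathcal{S}(T^p)$ factors through the homology action (which the paper justifies via asphericity --- elements of $\mathscr{I}_\cat(T^p)$ are homotopic to the identity --- plus the invariance of the Lie framing under $\Mod(T^p)$), and $\GL(p,\ZZ_2)$ is transitive on nonzero vectors, so the orbit of a non-Lie structure has size $2^p-1$.

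The sharpness half, however, has a genuine gap. Your plan only controls the image of the extendable elements in $\GL(p,\ZZ_2)$, but surjecting onto the stabilizer of $\imath^\sharp(\varsigma^{p+2})$ there does not bound $[\mcg_\topo(T^p):\esg_\topo(\imath)]$: to force the index to be $2^p-1$ you must show that $\esg_\topo(\imath)$ contains (a subgroup of index $2^p-1$ containing) the \emph{entire preimage} of that stabilizer in $\mcg_\topo(T^p)$, and this preimage contains both the level-two congruence subgroup of $\SL(p,\ZZ)$ and the whole non-linear part $\mathscr{I}_\topo(T^p)$, which for $p\geq 5$ is an infinitely generated abelian group $\mathscr{W}_p$. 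Your generators (i)--(iii) are all ``geometric/linear'' and cannot account for $\mathscr{I}_\topo(T^p)$ at all; without showing those classes extend, the subgroup you construct has infinite index, no matter what its mod-$2$ image is. This is exactly why your closing remark about $p=4$ (``only the image in $\GL(4,\ZZ/2)$ enters'') does not bypass anything. The paper spends Lemma \ref{IcapE} on precisely this point, using Hatcher's concordance theorem: a class in $\mathscr{W}_p$ is smoothly concordant to the identity, and the concordance on $T^p\times[0,1]$ is spun around the normal $D^2$ of a tubular neighborhood to produce an ambient extension (with a separate stabilization trick $\tau\mapsto\tau\times\id_{S^1}$ for $p=4$). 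For the linear part, what is needed is an index-$(2^p-1)$ subgroup of $\Mod(T^p)\cong\SL(p,\ZZ)$ itself (not of $\GL(p,\ZZ_2)$) that extends over the unknotted embedding; this is the nontrivial construction quoted from \cite[Theorem 1.4]{DLWY}, not something your generators obviously generate. Finally, your generator (iii) does not exist in the form described: $T^p$ has codimension $2$ in $S^{p+2}$, so it does not separate and there is no ``complementary solid torus'' to swap with $V$ (that picture is special to $T^2\subset S^3$); indeed $V\cong T^{p-1}\times D^2$ is a Seifert hypersurface, while Alexander duality gives $H_1(S^{p+2}\setminus T^p;\ZZ)\cong\ZZ$, which rules out any complementary copy of $T^{p-1}\times D^2$ for $p\geq 2$.
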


\begin{remark} A parallel proof shows $[\mcg_\cat(T^p):\esg_\cat(\imath)]\geq 2^p-1$, cf. Lemma \ref{actionT^p}. For $p\neq4$, this also follows from the fact that the natual forgetting functor $\diff\to\pl\to\topo$ yields epimorphisms on
$\mcg_\cat(T^p)$ and homomorphisms on $\esg_\cat(T^p)$, and hence $[\mcg_\diff(T^p):\esg_\diff(\imath)]\geq[\mcg_\pl(T^p):\esg_\pl(\imath)]\geq[\mcg_\topo(T^p):\esg_\topo(\imath)]$.
\end{remark}

\begin{cor}\label{extend-T^3} If a smoothly embedded $3$-torus $T^3$ in $\RR^5$ has a (hence any) 
smooth Seifert hypersurface $\Sigma^4$ of signature $0$ modulo $16$, then $[\mcg_\topo(T^3):\esg_\topo(\imath)]\geq 7$.\end{cor}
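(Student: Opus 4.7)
The plan is to deduce the statement from Theorem~\ref{main-extend-T^p} applied with $p=3$. That theorem yields $[\mcg_\topo(T^3):\esg_\topo(\imath)]\geq 2^3-1=7$ as soon as the induced spin structure $\imath^\sharp(\varsigma^5)$ on $T^3$ differs from the Lie-group spin structure $\sigma_{\mathrm{Lie}}$, so the task reduces to extracting this fact from the signature hypothesis via the Rokhlin invariant.

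First I would justify ``hence any''. Two smooth Seifert hypersurfaces $\Sigma,\Sigma'\subset\RR^5$ for $\imath(T^3)$ are both spin (as cooriented submanifolds of $\RR^5$) and induce the same spin structure $\imath^\sharp(\varsigma^5)$ on the common boundary, so $\Sigma\cup_{T^3}(-\Sigma')$ is a closed spin $4$-manifold of signature $\mathrm{sign}(\Sigma)-\mathrm{sign}(\Sigma')$, which is divisible by $16$ by Rokhlin's theorem. Thus $\mathrm{sign}(\Sigma)\bmod 16$ is an invariant of $\imath$; by the identical argument applied to any spin nullbordism of $(T^3,\imath^\sharp(\varsigma^5))$, it coincides with the Rokhlin invariant $\mu(T^3,\imath^\sharp(\varsigma^5))\in\ZZ/16$, which therefore vanishes under our hypothesis.

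It remains to verify $\mu(T^3,\sigma_{\mathrm{Lie}})\neq 0$. Among the eight spin structures on $T^3=S^1\times S^1\times S^1$, exactly seven are bounding on at least one circle factor (inclusion--exclusion: $4+4+4-2-2-2+1=7$); each such structure extends across the corresponding $D^2\times T^2$, of signature $0$, giving Rokhlin invariant $0$. The remaining structure is $\sigma_{\mathrm{Lie}}$, and I would compute $\mu(T^3,\sigma_{\mathrm{Lie}})=8$ by combining the product formula $\mu(N^2\times S^1_{\mathrm{Lie}})\equiv 8\,\mathrm{Arf}(N^2,\sigma_N)\pmod{16}$ with the classical $\mathrm{Arf}(T^2,\sigma_{\mathrm{Lie}})=1$, viewing $T^3$ as $T^2\times S^1$ with the Lie-group structure on each factor; alternatively one can read off the same value from a Kirby presentation of $T^3$ together with the standard surgery formula for the Rokhlin invariant. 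This nonvanishing is the only non-routine ingredient---once it is in place, $\imath^\sharp(\varsigma^5)\neq\sigma_{\mathrm{Lie}}$ and Theorem~\ref{main-extend-T^p} supplies the bound.
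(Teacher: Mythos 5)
Your argument is correct and follows essentially the same route as the paper: both reduce to Theorem \ref{main-extend-T^p} by showing $\imath^\sharp(\varsigma^5)\neq\sigma_{\mathrm{Lie}}$, combining Rokhlin's theorem with the fact that $(T^3,\sigma_{\mathrm{Lie}})$ bounds only spin $4$-manifolds of signature $8\bmod 16$ (the paper phrases this as gluing $\Sigma$ to a signature-$8$ spin filling $N$ and deriving a contradiction, which is the same computation as your $\mu(T^3,\sigma_{\mathrm{Lie}})=8$). The only cosmetic difference is that you derive this key input from the product formula $\mu(F\times S^1_{\mathrm{Lie}})\equiv 8\,\mathrm{Arf}(F,\sigma)$ and justify the ``hence any'' clause by doubling two Seifert hypersurfaces, where the paper simply cites Kirby and Saeki--Sz\H{u}cs--Takase.
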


Note any smooth Seifert hypersurface in this case must have signature $0$ modulo $8$, (cf. \cite[Proposition 6.1]{SST}).

\begin{cor}\label{ukT-DiffPL} If $\imath:T^p\hookrightarrow\RR^{p+2}$ is an unknotted embedding, then $[\mcg_\diff(T^p):\esg_\diff(\imath)]$ and $[\mcg_\pl(T^p):\esg_\pl(\imath)]$ are finite but at least $2^p-1$.\end{cor}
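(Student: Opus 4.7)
The plan is to deduce the corollary from Theorem~\ref{main-extend-T^p} by combining the spin-structure obstruction of Proposition~\ref{spin-obstruction-top}, the forgetful functors between categories, and the smoothness of the explicit extensions used to realize the bound for unknotted embeddings. For the lower bound, I would simply invoke the remark following Theorem~\ref{main-extend-T^p}: its parallel proof, based on invariance of $\imath^\sharp(\varsigma^{p+2})$ under any $\cat$-extendable class and the orbit computation of Lemma~\ref{actionT^p}, applies verbatim in each category, yielding $[\mcg_\cat(T^p):\esg_\cat(\imath)]\geq 2^p-1$ for $\cat=\diff$ and $\pl$.

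For the finiteness, I would introduce the natural homomorphism $\phi_\cat:\mcg_\cat(T^p)\to\mcg_\topo(T^p)$ induced by the forgetful functor. Because any $\cat$-extension of $\imath$ is also a topological extension, $\esg_\cat(\imath)\subseteq\phi_\cat^{-1}(\esg_\topo(\imath))$, and the outer index satisfies
$$[\mcg_\cat(T^p):\phi_\cat^{-1}(\esg_\topo(\imath))]\leq[\mcg_\topo(T^p):\esg_\topo(\imath)]=2^p-1$$
by Theorem~\ref{main-extend-T^p}. It then suffices to bound the relative index $[\phi_\cat^{-1}(\esg_\topo(\imath)):\esg_\cat(\imath)]$. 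The key observation will be that the topological extensions used to realize the upper bound in Theorem~\ref{main-extend-T^p} for an unknotted $\imath$ are \emph{in fact} smooth, arising from ambient affine-linear actions on the standard product handlebody bounded by $\imath(T^p)$. This would imply that $\phi_\cat$ sends $\esg_\cat(\imath)$ onto $\esg_\topo(\imath)$, giving the decomposition $\phi_\cat^{-1}(\esg_\topo(\imath))=\esg_\cat(\imath)\cdot\ker(\phi_\cat)$ and reducing the problem to bounding $[\ker(\phi_\cat):\ker(\phi_\cat)\cap\esg_\cat(\imath)]$.

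The hard part will be the finiteness of this last index, which measures ``smoothly or PL exotic but topologically trivial'' mapping classes modulo those that are $\cat$-extendable. For $p\leq 3$ this is automatic since $\mcg_\diff(T^p)\cong\mcg_\pl(T^p)\cong\mcg_\topo(T^p)\cong\GL(p,\ZZ)$ canonically, so $\ker(\phi_\cat)$ is trivial and the total index is exactly $2^p-1$. For $p\geq 5$ I would appeal to Hatcher's structure theorem for $\pi_0\diff(T^p)$, which identifies $\ker(\phi_\cat)$ as a finitely generated group; one then needs to verify that representatives of its generators may be taken with support in a small coordinate ball disjoint from a chosen Seifert hypersurface of $\imath$, so that they extend $\cat$-ly by the identity on the complement. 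The remaining case $p=4$ is the most delicate and should be handled by passing to a finite-index subgroup of $\mcg_\cat(T^4)$ on which the preceding argument applies, yielding finiteness in all dimensions.
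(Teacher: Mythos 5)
Your lower bound is essentially fine, except that invoking the remark after Theorem \ref{main-extend-T^p} only works once you know that an unknotted embedding satisfies its hypothesis, i.e.\ that $\imath^\sharp(\varsigma^{p+2})$ is not the Lie-group spin structure; this is not automatic and is exactly Lemma \ref{notLie} in the paper (proved by exhibiting the product Seifert hypersurface $D^2\times T^{p-1}$ and noting which spin structures it can bound). The serious problems are in your finiteness argument. Its linchpin is the claim that $\phi_\cat$ maps $\esg_\cat(\imath)$ \emph{onto} $\esg_\topo(\imath)$ because ``the topological extensions used to realize the upper bound \ldots are in fact smooth, arising from ambient affine-linear actions on the standard product handlebody.'' The affine/linear extensions of \cite{DLWY} only account for the modular part $\Mod(T^p)\cong\SL(p,\ZZ)$. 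But $\esg_\topo(\imath)$ also contains all of $\mathscr{I}_\topo(T^p)\cong\mathscr{W}_p$, an \emph{infinitely generated} group of classes acting trivially on homology, and none of these is realized by a linear action. Showing that these classes are hit by $\cat$-extendable classes is precisely the content of Lemma \ref{IcapE}: for $p\geq5$ one uses Hatcher's theorem that such diffeomorphisms are smoothly concordant to the identity and then spins the concordance radially over the tubular neighborhood $T^p\times D^2$ to extend by the identity outside. Nothing in your proposal supplies this, so the reduction to $[\ker(\phi_\cat):\ker(\phi_\cat)\cap\esg_\cat(\imath)]$ never gets off the ground. Your treatment of that last index is also off: the kernel of $\mcg_\diff(T^p)\to\mcg_\topo(T^p)$ for $p\geq5$ is $H^2(T^p;\ZZ_2)\oplus\bigoplus_i H^i(T^p;\Gamma_{i+1})$, which is in fact \emph{finite} (so no generator analysis is needed), and in any case its generators are not representable by diffeomorphisms supported in a coordinate ball except for the top summand $H^p(T^p;\Gamma_{p+1})$ coming from connected sums with exotic spheres. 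Finally, for $p=4$ the phrase ``passing to a finite-index subgroup of $\mcg_\cat(T^4)$ on which the preceding argument applies'' is not an argument: the structure of $\mcg_\cat(T^4)$ (and of the forgetful maps there) is unknown, which is exactly why the paper handles $p=4$ by the crossing-with-$S^1$ trick inside the normal disk bundle, mapping $\mathscr{I}_\cat(T^4)\to\mathscr{I}_\cat(T^5)$ and pulling back $\mathscr{W}_5$.

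For comparison, the paper's proof avoids the forgetful map altogether: it decomposes $\mcg_\cat(T^p)=\mathscr{I}_\cat(T^p)\rtimes\Mod(T^p)$, proves $[\mathscr{I}_\cat(T^p):\mathscr{I}_\cat(T^p)\cap\esg_\cat(\imath)]<\infty$ for \emph{any} smooth embedding (Lemma \ref{IcapE}, via the concordance argument above and the $p=4$ suspension trick), quotes \cite[Theorem 1.4]{DLWY} for $[\Mod(T^p):\Mod(T^p)\cap\esg_\cat(\imath)]\leq 2^p-1$ in the unknotted case, and then combines the two finite indices with the elementary semidirect-product estimate of Lemma \ref{group}. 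If you want to salvage your outline, you must prove the analogue of Lemma \ref{IcapE} (extendability of the homologically trivial classes), at which point the semidirect-product route is both shorter and category-uniform, whereas the forgetful-map route still leaves $p=4$ unresolved.
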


\begin{remark} While these indices are indeed $2^p-1$ when $p\leq 3$, it is still an interesting question figuring out the indices when $p>3$.   \end{remark}

It is pretty easy to find examples 
where the assumption of Theorem \ref{main-extend-T^p} holds, but with some effort one can still
find smooth embeddings $\imath:T^p\hookrightarrow\RR^{p+2}$ which induce the Lie-group spin structure on $T^p$ when $p\geq 3$, (cf. \cite{SST} for such an example). Unfortunately, Theorem \ref{main-extend-T^p} says nothing about that case. We make the following conjecture:

\begin{conj}
For any smooth embedding $\imath:T^p\hookrightarrow\RR^{p+2}$, $\esg_\topo(\imath)$ is a proper
subgroup of $\mcg_{\topo}(T^p)$.
\end{conj}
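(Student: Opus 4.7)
The plan proceeds by case analysis. For $p=2$ the conjecture is already established by Theorem \ref{main-extend-F_g} (the torus being $F_1$). When $p\geq 3$ and $\imath^{\sharp}(\varsigma^{p+2})$ is not the Lie-group spin structure, Theorem \ref{main-extend-T^p} gives the far stronger bound $[\mcg_\topo(T^p):\esg_\topo(\imath)]\geq 2^p-1$. So the heart of the matter is a smooth embedding $\imath:T^p\hookrightarrow\RR^{p+2}$ with $p\geq 3$ inducing the Lie-group spin structure. Here Proposition \ref{spin-obstruction-top} provides no constraint at all, since the Lie-group spin structure is the unique element of its torsor fixed by the whole of $\mcg_\topo(T^p)$.

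The first secondary invariant I would exploit is the Euler class $e(\nu_\imath)\in H^2(T^p;\ZZ)\cong \Lambda^2\ZZ^p$ of the normal bundle of $\imath$. An orientation-preserving topological extension $\tilde\tau$ of $\tau$ carries the normal bundle to itself, hence preserves $e(\nu_\imath)$ under the induced $GL(p,\ZZ)$-action on $\Lambda^2\ZZ^p$. Consequently $\esg_\topo(\imath)$ is contained in the stabilizer of $e(\nu_\imath)$; whenever $e(\nu_\imath)\neq 0$ this stabilizer is proper in $\mcg_\topo(T^p)$, closing that subcase. The remaining situation is $e(\nu_\imath)=0$, so that $\nu_\imath$ is a trivial oriented rank-$2$ bundle on $T^p$. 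Here I would attempt a deeper obstruction built from a smooth Seifert hypersurface $\Sigma$: natural candidates are the relative spin-bordism class of $(\Sigma,\imath^{\sharp}\varsigma^{p+2})$ in $\Omega^{\mathrm{Spin}}_{p+1}(T^p,\mathrm{Lie})$, the mod-$16$ signature of $\Sigma$ when $p=3$ (in line with Corollary \ref{extend-T^3}), and an Arf--Kervaire invariant when $p+1\equiv 2\pmod 4$. Each of these is preserved by any extendable mapping class, so one would then seek a specific $A\in GL(p,\ZZ)\subset \mcg_\topo(T^p)$ whose action changes the chosen invariant.

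The main obstacle is precisely this last step: exhibiting, uniformly over all Lie-group-spin embeddings with trivial normal bundle, a concrete mapping class whose extension would contradict a refined invariant. Seifert hypersurfaces are highly non-unique; their signatures, Kervaire classes, and bordism types can be altered by interior connect-sum with closed spin manifolds, so any useful invariant must be genuinely insensitive to that ambiguity. The fact that the authors state this only as a conjecture strongly suggests that no uniform obstruction emerges from spin-bordism data alone, and that a further ingredient (Casson--Gordon-type signatures of an auxiliary link, or an equivariant-bordism analysis of diffeomorphisms) is likely needed to close the gap.
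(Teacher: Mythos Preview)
This statement is a \emph{conjecture} in the paper, not a theorem; the authors offer no proof, and you correctly sense this in your closing paragraph. So there is no paper proof to compare against.

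That said, your plan contains a concrete error. Your ``first secondary invariant'' is the Euler class $e(\nu_\imath)\in H^2(T^p;\ZZ)$ of the normal bundle, and you propose to split into the subcases $e(\nu_\imath)\neq 0$ and $e(\nu_\imath)=0$. But $e(\nu_\imath)$ is \emph{always} zero: any closed oriented smooth submanifold of $\RR^n$ has vanishing normal Euler class (Milnor--Stasheff, Corollary~11.4), and in codimension~$2$ this forces $\nu_\imath$ to be trivial. The paper itself invokes exactly this fact at the start of Section~\ref{Sec-inducedSpin} to trivialize the normal bundle before constructing $\imath^\sharp(\varsigma^{p+2})$. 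So the subcase $e(\nu_\imath)\neq 0$ is empty, and the Euler class provides no obstruction whatsoever; your plan immediately collapses to the ``remaining situation'' that you already concede you cannot resolve.

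What survives of your outline is therefore just the observation that the conjecture is open precisely in the Lie-group-spin case with $p\geq 3$, together with a list of candidate invariants (relative spin-bordism classes, $\bmod\,16$ signatures, Arf--Kervaire invariants) whose relevance and invariance under Seifert-hypersurface modification you have not established. This is a fair description of where the difficulty lies, but it is not a proof strategy so much as a catalogue of hopes; in particular you would need to show both that such an invariant is well-defined (independent of $\Sigma$) and that some explicit mapping class moves it---neither of which you have done.
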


In Section
\ref{Sec-spinPrelim}, we recall some preliminary material about spin
structures in terms of trivilizations. In Section \ref{Sec-inducedSpin},
we introduce $\imath^\sharp(\varsigma^{p+2})$ (Definition \ref{inducedSpin} using
Seifert hypersurfaces, and prove Proposition \ref{spin-obstruction-top}. 
In Section \ref{Sec-F_g}, we consider embedded surfaces in $\RR^4$
and using the action of $\mcg_\cat(F_g)$ on the space of spin structures $\mathcal{S}(F_g)$
on $F_g$ to prove Theorem \ref{main-extend-F_g} and Corollary \ref{noExtension}.
In Section \ref{Sec-T^p}, we consider embedded $T^p$ in $\RR^{p+2}$.
We first review some results of Hatcher about the structure of $\mcg_\cat(T^p)$ 
for $p\neq4$,
then prove Theorem \ref{main-extend-T^p} and its corollaries by studying
the action of $\mcg_\cat(T^p)$ on $\mathcal{S}(T^p)$.

\bigskip\noindent\textbf{Acknowledgement}. The first and the third authors are partially supported by grant
No.10631060 of the National Natural Science Foundation of China. We
also thank Robert Edwards, Andrey Gogolev, Jonathan Hillman, Robion Kirby, and
Hongbin Sun for helpful communications.

\section{Spin structure preliminaries}\label{Sec-spinPrelim}

In this section, we recall some basic facts about spin structures,
cf. \cite[Chapter IV]{Ki}, \cite{Mi63}.

Spin structures of a rank $n$ vector bundle $\xi$ over a CW complex
$X$ can phrased with trivialization, i.e. framing. $\xi$ can be
endowed with a spin structure if $E\oplus\epsilon^k$ has a
trivialization over the $1$-skeleton which may extend over the
$2$-skeleton, (if $n\geq 3$, $k=0$; if $n=2$, $k=1$; if $n=1$,
$k=2$), and a spin structure is a homotopy class of such
trivializations over the $1$-skeleton. For a CW complex $X$, we use
$X^{(i)}$ to denote its $i$-skeleton.

\begin{lemma}\label{eqvSpin} Suppose $\xi$ is a rank $n$ vector bundle over a topological space
$M$ that admits CW structures. Then there is a natural bijection
between the sets of spin structures corresponding to different CW
structures.\end{lemma}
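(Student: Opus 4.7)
The plan is to factor each CW-structure-dependent set of spin structures through a common intrinsic description, making the bijection between them tautologically natural. For concreteness I treat the case $n \geq 3$, $k = 0$; the remaining low-rank cases follow by the same argument applied to a stabilization $\xi \oplus \epsilon^k$.

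First, I would introduce the intrinsic model. Let $f_\xi: M \to B\SO(n)$ be a classifying map for the oriented frame bundle of $\xi$, and let $\mathcal{S}(\xi)$ denote the set of homotopy classes of lifts of $f_\xi$ along the fibration $B\Spin(n) \to B\SO(n)$, whose fiber is $K(\ZZ/2,1)$. This set is manifestly independent of any CW decomposition of $M$: $f_\xi$ is well defined up to homotopy by $\xi$ alone, and $\mathcal{S}(\xi)$ is a homotopy invariant. When nonempty, $\mathcal{S}(\xi)$ is a torsor over $H^1(M;\ZZ/2)$, and $w_2(\xi)$ is its sole obstruction to nonemptiness.

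Next, I would construct, for each CW structure $X$ on $M$, a natural map $\Phi_X$ from the framing-based set $\mathcal{S}^X(\xi)$ of the excerpt to $\mathcal{S}(\xi)$. Given an admissible trivialization $\varphi$ of $\xi|_{X^{(1)}}$, one lifts the restricted classifying map $f_\xi|_{X^{(1)}}$ to $B\Spin(n)$; the lifting obstruction lives in $H^2(X^{(1)};\ZZ/2)=0$. The hypothesis that $\varphi$ extends over $X^{(2)}$ is precisely the vanishing of the primary obstruction to extending this lift over $X^{(2)}$; and since $\pi_i K(\ZZ/2,1)=0$ for $i\geq 2$, all higher cellular obstructions in $H^{i+1}(M, X^{(i)}; \pi_i K(\ZZ/2,1))$ vanish automatically, so the partial lift extends uniquely up to homotopy to a global lift, yielding $\Phi_X(\varphi)\in\mathcal{S}(\xi)$. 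Homotopic admissible framings produce homotopic lifts by the same cellular argument, and a parallel obstruction analysis shows $\Phi_X$ is a bijection: both sides are either empty or $H^1(M;\ZZ/2)$-torsors and $\Phi_X$ is equivariant.

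Finally, for two CW structures $X_1, X_2$ the composition $\Phi_{X_2}^{-1}\circ \Phi_{X_1}$ gives the desired natural bijection between the corresponding framing-based spin structure sets. Naturality with respect to bundle isomorphisms and further changes of CW structure is built in, since the bijection factors through the CW-independent set $\mathcal{S}(\xi)$. The main technical issue will be the bookkeeping: verifying that $\Phi_X$ is well defined on the homotopy equivalence classes implicit in the excerpt's definition, which amounts to checking that an admissible-framing homotopy on $X^{(1)}$ lifts to a homotopy of global lifts in $B\Spin(n)$, and conversely that a homotopy between global lifts restricts, after cellular approximation, to an admissible-framing homotopy on $X^{(1)}$. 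Both assertions reduce to routine cellular obstruction computations exploiting the acyclicity of $K(\ZZ/2,1)$ in dimensions $\geq 2$.
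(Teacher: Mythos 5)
Your proposal is correct in substance, but it takes a genuinely different route from the paper. The paper never leaves the trivialization picture: given spin structures $\sigma_0,\sigma_1$ for two CW structures, it defines a difference homomorphism $\sigma_1-\sigma_0:\pi_1(M)\to\ZZ_2$ by comparing the two framings along freely homotopic loops carried in the two $1$-skeleta (via a bundle map over $S^1\times I$), notes that for each fixed CW structure the spin structures form an affine $H^1(M;\ZZ_2)\cong\mathrm{Hom}(\pi_1(M),\ZZ_2)$ compatible with this difference, and then pairs each $\sigma_0$ with the unique $\sigma_1'$ of zero difference. You instead factor both CW-dependent sets through the intrinsic set of homotopy classes of lifts of the classifying map along $B\Spin(n)\to B\SO(n)$, using obstruction theory with fiber $K(\ZZ_2,1)$. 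Your approach buys naturality for free and ties the framing definition to the standard one; the paper's approach is more elementary (no classifying spaces or obstruction theory) and, more to the point for this paper, the difference homomorphism and the attendant criterion of comparing spin structures by restriction to loops are exactly the tools reused later (e.g.\ in the proof of Proposition \ref{spin-obstruction-top}), so the paper's proof doubles as setup. One point in your write-up needs tightening: as literally stated, ``one lifts $f_\xi|_{X^{(1)}}$ to $B\Spin(n)$; the obstruction lives in $H^2(X^{(1)};\ZZ_2)=0$'' only asserts that some lift exists and does not depend on $\varphi$, so $\Phi_X$ would not be well defined; you must take the specific lift determined by the trivialization (via the nullhomotopy of $f_\xi|_{X^{(1)}}$ that $\varphi$ provides, transported through the fibration), and then verify that extendability of $\varphi$ over $X^{(2)}$ matches the vanishing of the primary obstruction for that lift. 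You flag this as bookkeeping, and it is indeed routine, but it is the step that makes the comparison map canonical rather than merely existent.
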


\begin{proof}
Suppose $X_0$, $X_1$ are two CW structures on $M$, and let
$\sigma_i$ be a spin structure of $\xi$ with respect to $X_i$
($i=0,1$). There is a natural \emph{difference} homomorphism:
$$\sigma_1-\sigma_0:\pi_1(M)\to\ZZ_2,$$
defined as follows. For any $[\alpha]\in \pi_1(M)$, let
$\gamma_0,\gamma_1:S^1\to M$ be two closed paths in
$X^{(1)}_0,X^{(1)}_1$ respectively, both basepoint-freely homotopic
to $\alpha$ in $M$. Let $\gamma_t:S^1\to M$ ($t\in I$) be any
homotopy between $\gamma_0$ and $\gamma_1$, and pick a
trivialization $\varsigma$ of $\epsilon^{n+k}|_{S^1\times I}$. If
$\gamma:S^1\times I\to M$ extends to a bundle map
$\tilde{\gamma}:\epsilon^{n+k}|_{S^1\times I}\to
\xi\oplus\epsilon^k|_M$, such that $\tilde{\gamma}_*(\varsigma|_{S^1\times
0})\simeq \sigma_0$, $\tilde{\gamma}_*(\varsigma|_{S^1\times 1})\simeq
\sigma_1$, then we define $(\sigma_1-\sigma_0)([\alpha])=0$,
otherwise $(\sigma_1-\sigma_0)([\alpha])\ne 0$. It is easy to 
see that $\sigma_1-\sigma_0$ is a well-defined homomorphism, and
$(\sigma_2-\sigma_1)-(\sigma_1-\sigma_0)=\sigma_2-\sigma_0$.

Thus, we define two spin structures with respect to possibly
different spin structures to be \emph{equivalent} if their
difference is zero. If $X_0$ and $X_1$ happen to be the same, it is
clear that $\sigma_0$ and $\sigma_1$ are equivalent if and only if
they are equal by definition. Moreover, for any CW complex
structure, the space of spin structures forms an affine
$H^1(M;\ZZ_2)\cong {\rm Hom}(\pi_1(M),\ZZ_2)$, precisely as
described by the difference homomorphism. Thus, if
$\sigma_1-\sigma_0$ is not zero, we find exactly one $\sigma'_1$
with respect to $X_1$, such that $\sigma'_1-\sigma_0$ is zero. This
implies that the spaces of spin structures corresponding to
different CW structures are in natural bijection to each other,
namely according to the equivalence.\end{proof}

For a rank $n$ vector bundle $\xi$ over a CW space $M$, a spin
structure on $\xi$ is known as with respect to any CW structure on
$M$, up to the natural equivalence. A spin structure of a smooth
manifold $M$ is a spin structure of its tangent bundle. For any
closed path $\alpha$ on $M$, we may also restrict a spin structure
$\sigma$ of $M$ to $\sigma|_\alpha$, namely picking a trivialization
of $\xi|_\alpha$ which extends to a trivialization on (some)
$1$-skeleton equivalent to $\sigma$. It is clear that
two spin structures $\sigma_1,\sigma_0$ of $\xi$ are equivalent
if and only if the trivialization $\sigma_1|_\alpha\simeq \sigma_0|_\alpha$ for
any closed path $\alpha$ on $M$. 
For an oriented smooth manifold $M$, $M$ has a spin structure if and only if $w_2(M)=0$, and 
when $M$ has
spin structures, the space $\mathcal{S}(M)$ of spin structures on $M$ is an affine $H^1(M;\ZZ_2)$.

If $\xi=\xi'\oplus \xi''$ are bundles over a CW space $X$, then
spin structures on any two determine a spin structure of
the third, so that $\sigma\simeq\sigma'\oplus\sigma''|_{X^{(1)}}$.
The less trivial direction that $\sigma,\sigma'$ determine $\sigma''$
follows from the general fact: if $\xi=\eta\oplus\xi'$ is a trivial
 $(n+k)$-vector bundle where $\eta$, $\xi'$ 
has rank $n$, $k$, respectively, and if $\xi$, $\xi'$ are both trivialized
over $X^{(n-1)}$, then there is a complementary trivialization of $\eta$ over $X^{(n-1)}$,
which is unique over $X^{(n-2)}$ up to homotopy, (cf. \cite[p. 33]{Ki}).

For a spin manifold $M$ 
with boundary $\partial M$, $\partial M$ has a natural spin 
structure induced from the spin structure of $M$ 
and the (inward) normal vector of $\partial M$ in $M$. A manifold is called a \emph{spin
boundary} if there is a spin manifold bounding it, inducing its spin structure.
For example, the circle $S^1$ has two spin structures: one spin-bounds the spin $D^2$, 
and the other is the Lie-group spin structure which is not a spin-boundary.

\section{Invariant induced spin structure}\label{Sec-inducedSpin}

In this section, we introduce the induced spin structures for closed oriented codimension-$2$
smooth submanifolds of $\RR^{p+2}$, and prove Proposition \ref{spin-obstruction-top}.

Suppose $\imath:M\hookrightarrow\RR^{p+2}$ is a connected, closed, oriented $p$-dimensional 
smooth submanifold of $\RR^{p+2}$, $p\geq 1$. Since any closed oriented
smooth submanifold of $\RR^{p+2}$ has trivial Euler class
(\cite[Corollary 11.4]{MS}), the normal bundle of $M$ in
$\RR^{p+2}$ is trivial as $M$ is codimension $2$. On the other hand,
it is well-known that there exists
a \emph{Seifert hypersurface} $\Sigma\subset\RR^{p+2}$ of $\imath(M)$, 
namely, a compact connected oriented $(p+1)$-dimensional smooth submanifold
such that $\partial\Sigma=\imath(M)$, (cf. for example, \cite[Lemma 2.2]{Er}).

Let $W$ be an inward normal vector field of $\imath(M)$ in $\Sigma$ (say, 
w.r.t some compatible Riemannian metric on a collar), and $H$ be a normal vector field of $\Sigma$ in $\RR^{p+2}$ over $\imath(M)$, such that the orientation $(W,H)$ of the normal bundle $N_{\RR^{p+2}}(\imath(M))$ and the orientation
of $M$ match up to that the canonical orientation of $\RR^{p+2}$. The trivialization
$(W,H)$ of $N_{\RR^{p+2}}(\imath(M))$ defines a spin structure $\sigma$ of $N_{\RR^{p+2}}(\imath(M))$,
and the canonical spin structure $\varsigma^{p+2}$ of $\RR^{p+2}$ restricts to a spin structure
on $T\RR^{p+2}|_{\imath(M)}$. As $T\RR^{p+2}|_{\imath(M)}=\imath_*(TM)\oplus N_{\RR^{p+2}}(\imath(M))$,
we conclude there is a complementary spin structure $\sigma^\perp$ of $\imath_*(TM)$ such that: $$\sigma^\perp\oplus\sigma=\varsigma^{p+2},$$ (i.e. as trivializations $\sigma^\perp\oplus\sigma\simeq\varsigma$ over ${M^{(1)}}$).

\begin{lemma}\label{independent} The spin structure $\sigma^\perp$ on $\imath(M)$ is independent of the choice of $\Sigma$ and $(W,H)$. \end{lemma}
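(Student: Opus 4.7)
The proof falls into two parts: independence from the framing $(W,H)$ for a fixed Seifert hypersurface $\Sigma$, and independence from $\Sigma$ itself. For the first part, once $\Sigma$ is fixed, $W$ ranges over the positive sections of the rank-one inward normal bundle of $\imath(M)$ in $\Sigma$, and $H$ ranges over the positive sections of the rank-one normal bundle of $\Sigma$ in $\RR^{p+2}$ compatible with the ambient orientation. Each of these spaces of sections is a convex cone, hence contractible; thus $(W,H)$ is unique up to homotopy through oriented framings of $N_{\RR^{p+2}}(\imath(M))$, and $\sigma$ (hence $\sigma^\perp$) depends only on $\Sigma$.

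For the second part, I first plan to reinterpret $\sigma^\perp$ as a boundary spin structure. Since the rank-one oriented normal bundle $N_{\RR^{p+2}}(\Sigma)$ is trivialized up to homotopy by $H$, the splitting $\varsigma^{p+2}|_\Sigma \simeq \sigma_\Sigma\oplus H$ defines a canonical spin structure $\sigma_\Sigma$ on $\Sigma$. A direct associativity check using the nested decomposition $T\RR^{p+2}|_{\imath(M)} = \imath_*(TM)\oplus\RR W\oplus\RR H$ then identifies the boundary spin structure of $\sigma_\Sigma$ on $\partial\Sigma=\imath(M)$ (taken with respect to the inward normal $W$) with $\sigma^\perp$.

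Given this reinterpretation, I compare two Seifert hypersurfaces $\Sigma_0,\Sigma_1$ as follows. After perturbing to general position and smoothing corners along $\imath(M)$, and resolving any interior intersections $\Sigma_0\cap\Sigma_1$ by standard transversality-and-surgery arguments, the union $Y=\Sigma_0\cup(-\Sigma_1)$ becomes a closed oriented $(p+1)$-submanifold of $\RR^{p+2}$, carrying a canonical spin structure $\sigma_Y$ from the trivialization of its rank-one normal bundle. Inside $Y$, the original $\imath(M)$ sits as a two-sided codimension-one submanifold separating $\Sigma_0$ from $-\Sigma_1$. Since the rank-one oriented normal bundle of $\imath(M)$ in $Y$ admits a unique spin structure, $\sigma_Y$ restricts unambiguously to a single spin structure on $\imath(M)$, independent of the normal direction chosen. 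Computing this restriction from the $\Sigma_0$ side via the reinterpretation above yields $\sigma^\perp_0$, while computing it from the $-\Sigma_1$ side yields $\sigma^\perp_1$; hence $\sigma^\perp_0=\sigma^\perp_1$.

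The main obstacle I anticipate is the geometric smoothing of $Y$ along $\imath(M)$ and the resolution of interior intersections, together with careful bookkeeping of orientations, notably verifying that reversing the orientation on $\Sigma_1$ does not alter the induced boundary spin structure on the underlying manifold $M$. An alternative route, which may be cleaner, is a direct loop-by-loop verification: on any closed path $\alpha\subset\imath(M)$, the difference $(\sigma^\perp_0-\sigma^\perp_1)([\alpha])\in\ZZ_2$ equals the mod-two degree of the transition map $\alpha\to\SO(2)$ between $(W_0,H_0)|_\alpha$ and $(W_1,H_1)|_\alpha$, which one would show to vanish by exhibiting a coherent framing on a disk bounded by $\alpha$ inside $\Sigma_0\cup(-\Sigma_1)$.
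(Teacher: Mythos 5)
Your first step (fixing $\Sigma$ and varying $(W,H)$ within the convex cones of inward and outward positive sections) is fine, but it only handles the trivial part of the statement. The real content is the comparison of two different Seifert hypersurfaces, and there your argument has a gap exactly at the place you describe as ``perturbing to general position and smoothing corners along $\imath(M)$.'' The two collars approach $\imath(M)$ along the directions $W_0$ and $W_1$, which are arbitrary sections of the normal circle bundle of $\imath(M)$; a smooth hypersurface $Y$ containing $\imath(M)$ whose two sides are the given collars exists only if $W_1$ can be homotoped (through inward fields of an isotoped copy of $\Sigma_1$, rel $\imath(M)$) to $-W_0$. The obstruction is the homotopy class of the ``angle'' map $M\to\SO(2)\simeq S^1$ comparing $(W_0,H_0)$ with $(W_1,H_1)$, and its vanishing (mod $2$) is precisely the assertion $\sigma_0^\perp=\sigma_1^\perp$ that you are trying to prove. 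If instead you smooth the corner in the standard way without first aligning the collars, the rounded hypersurface no longer contains $\imath(M)$, and the sentence ``$\imath(M)$ sits inside $Y$ as a two-sided hypersurface separating $\Sigma_0$ from $-\Sigma_1$'' is no longer available; either way the argument silently assumes the conclusion. Note also that your proposal never uses the one piece of global information that makes the lemma true, namely that $\Sigma_0$ and $\Sigma_1$ are embedded in $\RR^{p+2}$ with boundary exactly $\imath(M)$: a purely local/formal gluing argument cannot work, because the induced structure genuinely depends on the homotopy class of the section $W$, and only the ambient topology pins that class down.

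This is where the paper's proof differs: it shows directly that the framings $(W_0,H_0)$ and $(W_1,H_1)$ are homotopic, by observing that the push-off of any loop $\alpha\subset\imath(M)$ into the interior of its Seifert hypersurface is null-homologous in $\RR^{p+2}\setminus\imath(M)$ (it can be pushed off $\Sigma$ along $H$, hence has zero intersection with $\Sigma$, i.e.\ zero linking with $\imath(M)$), while by Alexander duality $H_1(\RR^{p+2}\setminus\imath(M))\cong\ZZ$ is generated by the meridian; since the two push-offs differ in that group by the winding number of the comparison map $\alpha\to\GL^+(2,\RR)$ times the meridian, that winding number vanishes for every $\alpha$, and $\pi_i(\GL^+(2,\RR))=0$ for $i\geq 2$ finishes it. Some input of this kind (linking numbers with $\imath(M)$, or equivalently the map to $S^1$ built from a tubular neighborhood of $\mathring\Sigma$) must appear in any correct proof; your closing alternative (``exhibit a coherent framing on a disk bounded by $\alpha$ inside $\Sigma_0\cup(-\Sigma_1)$'') does not supply it, since a general loop $\alpha$ bounds no disk there, and no reason is given for the mod-two degree of the transition map to vanish. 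To repair your approach, replace the gluing step by this linking-number computation; at that point the closed hypersurface $Y$ and the surgery on interior intersections become unnecessary.
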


\begin{proof} It suffices to show $\sigma$ is independent of the choice of $\Sigma$ and $(W,H)$. In fact, we show for any two choices $\Sigma$, $(W,H)$ and $\Sigma'$, $(W',H')$, the trivializations $(W,H)\simeq (W',H')$ over $\imath(M)$.

First observe that any loop $\alpha$ on $\imath(M)$, when pushed into $\mathring\Sigma$ along $W$, becomes null-homologous in $\RR^{p+2}\setminus \imath(M)$.
To see this, consider the map $f:\RR^{p+2}\setminus \imath(M) \to S^1$ defined as follows: take a tubular neighborhood $\mathcal{N}(\mathring\Sigma)$, where $\mathring\Sigma$ is the interior of $\Sigma$, and let $f|:\mathcal{N}(\mathring\Sigma)\to S^1$ to be the composition:
$\mathcal{N}(\mathring\Sigma)\cong \mathring\Sigma\times I\stackrel{p}\longrightarrow I\stackrel{q}\longrightarrow I/\partial I\cong S^1$,
where $p$ is the second-factor projection and $q$ is the quotient map; then extend $f|$ to $f:\RR^{p+2}\setminus \imath(M) \to S^1$ by the constant map. Then $f_*: H_1(\RR^{p+2}\setminus \imath(M))\to H_1(S^1)$ is isomorphic, but the push-off of $\alpha$ along $W$ is mapped to $0\in H_1(S^1)$, so it is null-homologous in $\RR^{p+2}\setminus\imath(M)$.

Now $(W,H)$ and $(W',H')$ differ pointwisely by an element of $\GL^+(2,\RR)$, namely for any $x\in M$, $(W',H')|_x=(W,H)|_x\cdot R(x)$ for some $R(x)\in \GL^+(2,\RR)$. This gives a map $R:M\to \GL^+(2,\RR)$. If for some loop $\alpha:S^1\to M$, $R\circ\alpha$ were not null-homotopic in $\GL^+(2,\RR)$, then the push-offs of $\alpha$ along $W$ and $W'$ would differ by a non-zero multiple of the meridian $\mu$, namely the loop which bounds a normal disk of $\imath(M)$. Since $\mu$ is the generator of $H_1(\RR^{p+2}\setminus\imath(M))\cong\ZZ$ by the Alexander duality, the two push-offs would not be both null-homologous in $\RR^{p+2}\setminus\imath(M)$, which is a contradiction. Thus $R_\sharp:\pi_1(M)\to\pi_1(\GL^+(2,\RR))$ is trivial. We conclude that $R$ is homotopic to the constant identity map as $\pi_i(\GL^+(2,\RR))\cong\pi_i(S^1)=0$ for $i\geq 2$.
This implies $(W,H)\simeq(W',H')$ over $M$.
\end{proof}

Lemma \ref{independent} allows us to make the following definition.

\begin{definition}\label{inducedSpin} For a smooth embedding $\imath:M\hookrightarrow\RR^{p+2}$ of a connected, closed, oriented $p$-dimensional 
smooth manifold $M$ into $\RR^{p+2}$, we define the \emph{induced spin structure} as:
$$\imath^\sharp(\varsigma^{p+2})=\imath^*(\sigma^\perp),$$
where $\sigma^\perp$ is as described above.
\end{definition}

\begin{proof}[{Proof of Proposition \ref{spin-obstruction-top}}]
We first show $\imath^\sharp(\varsigma^{p+2})$ is null spin-cobordant, or equivalently that $\sigma^\perp$ is a spin boundary. In fact, for a Seifert hypersurface $\Sigma$ of $\imath(M)$, the normal verctor field $H$ of $\Sigma$ in $\RR^{p+2}$ defines a spin structure $\sigma_H$ on the normal bundle $N_{\RR^{p+2}}(\Sigma)$, so there is a spin structure $\sigma_H^\perp$ on $T\Sigma$ such that $\sigma_H^\perp\oplus\sigma_H=\varsigma^{p+2}$ on $T\Sigma\oplus N_{\RR^{p+2}}(\Sigma)= T\RR^{p+2}|_\Sigma$. The spin boundary of $(\Sigma, \sigma_H^\perp)$ is clearly $(M,\sigma^\perp)$ by the construction.

We next prove the invariance of $\imath^\sharp(\varsigma^{p+2})$ under homeomorphically extendable self-diffeomorphisms.
Specifically, for an orientation-preserving self-diffeomorphism $\tau:M\to M$ which extends over $\imath$ as an orientation-preserving topological self-homeomorphism $\tilde\tau$ of $\RR^{p+2}$, we must show $\imath^\sharp(\varsigma^{p+2})$ equals $\tau^*(\imath^\sharp(\varsigma^{p+2}))=(\imath\circ\tau)^\sharp(\varsigma^{p+2})$. Without loss of generality,
we may assume $p>1$ as there is nothing to prove for $p=1$. We shall omit writing $\imath$ identifying $M$ as a submanifold of $\RR^{p+2}$, and identify $D^2$ as the unit disk in $\CC$. 

Let $\mathcal{N}$ be a closed
tubular neighborhood of $M$ in $\RR^{p+2}$, identified with
$M\times D^2$ such that $M$ is identified with $M\times\{ 0\}$ and $M\times\{1\}$
is the push-off of $M$ along $W$. By the
uniqueness of normal bundle for codimension $2$ locally flat
embedding (see \cite{KS} for the ambient dimension $\geq 5$, and
\cite[Section 9.3]{FQ} for that $=4$), we may assume
$\tilde{\tau}$ preserves $\mathcal{N}$, namely restricted to this
neighborhood, $\tilde{\tau}$ is a bundle map $M\times D^2\to M\times D^2$,
$\tilde{\tau}(x,v)\to (\tau(x), R(x).v)$, where $R(x)\in\SO(2)$.

Because $\tilde{\tau}(\Sigma)$ is still a (topological) Seifert
hypersurface, by the same argument of Lemma \ref{independent}, $R:M\to\SO(2)$ is homotopic
to the constant identity map. This
implies that $\tilde{\tau}|_N$ may be assumed to be
$\tau\times\id_{D^2}$ under the identification $\mathcal{N}\cong M\times
D^2$. Let
$X=S^{p+2}\setminus\mathring{\mathcal{N}}$, where $S^{p+2}=\RR^{p+2}\cup\{
\infty\}$. Extend $\tilde{\tau}$ to a homeomorphism of $S^{p+2}$, still
denoted as $\tilde{\tau}$, by defining
$\tilde{\tau}(\infty)=\infty$. We glue two (opposite) copies $X$, $-X$ along the boundary via
$\tilde{\tau}|_{\partial X}:\partial X\to\partial X$, and the
resulting smooth manifold is called $Y_\tau=X\cup_{\tau} (-X)$. On
the other hand, we may glue via $\id|_{\partial X}$ to obtain the
double of $X$, called $Y_\id=X\cup_\id (-X)$. Thus $Y_\tau$ is
homeomorphic to $Y_\id$ via $\tilde{\tau}\cup\id$.

Observe that $TY_\id$ is stably trivial. In fact, $X\subset
S^{p+2}=\partial D^{p+3}$, and we may push the interior of $X$
into the interior of $D^{p+3}$ so that $(X,\partial X)\subset
(D^{p+3},S^{p+2})$ is a proper embedding of pairs. We may further
assume that on the collar neighborhood of $\partial D^{p+3}$,
diffeomorphically $S^{p+2}\times I$, $X$ is identified as
$\partial X\times I$. Then doubling $D^{p+3}$ along boundary gives
a codimension $1$ smooth embedding $Y_\id\subset S^{p+3}$. Hence
clearly $TY_\id\oplus\epsilon^1$ is trivial, so $w_2(Y_\id)=0\in
H^2(Y_\id;\ZZ_2)$. Because the Stiefel-Whitney class depends only
on the homotopy type of the smooth manifold (cf. \cite{Wu}, also
\cite{MS}), it suffices to show that if $\tau$ does not preserve
$\imath^\sharp({\varsigma^{p+2}})$, then $w_2(Y_\tau)\in
H^2(Y_\tau;\ZZ_2)$ does not vanish.

Suppose on the contrary that $\tau$ does not preserve
$\imath^\sharp(\varsigma^{p+2})$, then there is some smoothly embedded loop $\alpha\subset M$
such that $\imath^\sharp(\varsigma^{p+2})|_\alpha\not\simeq \tau^*(\imath^\sharp(\varsigma^{p+2}))|_\alpha$.
Since we assumed that $\tilde{\tau}|_{\mathcal{N}}=\tau\times \id_{D^2}$
under the identification $\mathcal{N}\cong M\times D^2$, by the construction
of $\imath^\sharp(\varsigma^{p+2})$, we see
that the difference  $\tilde{\tau}_*(\varsigma^{p+2}|_{\mathcal{N}})-\varsigma^{p+2}|_{\mathcal{N}}$
between the spin structures on
$T\RR^{p+2}|_\mathcal{N}$ as an element $\in H^1(\mathcal{N};\ZZ_2)$ (cf. Section \ref{Sec-spinPrelim})
equals $\tau_*(\imath^\sharp(\varsigma^{p+2}))-(\imath^\sharp(\varsigma^{p+2}))\in H^1(M;\ZZ_2)$,
under the natural inclusion isomorphism $H^1(\mathcal{N};\ZZ_2)\to H^1(M;\ZZ_2)$.
Hence $\tilde{\tau}_*(\varsigma^{p+2}|_{\alpha\times\{1\}})\not\simeq\varsigma^{p+2}|_{\tau(\alpha)\times\{1\}}$.

As $\alpha\times\{1\}$ is null-homological in $X$ by the construction (cf. the proof of
Lemma \ref{independent}), it bounds a smoothly immersed oriented surface $j:F\looparrowright X$
such that $j(\mathring{F})\subset\mathring{X}$, and $j$ is a smooth embedding in a collar neighborhood
of $\partial F$. This can be seen by writing $\alpha$ as a product of commutators,
so there is a continuous map $F\to X$, which can be perturbed to be an immersion by the Whitney's trick.
Thus, there is a smoothly immersed closed oriented surface $\widehat{j}:K=F\cup(-F)\looparrowright Y_\tau$ defined by $j\cup (-\tilde\tau\circ j)$. 

However,
$\widehat{j}^*(TY_\tau)|_F=j^*(T\RR^{p+2})|_F$ has a spin structure
$\varsigma^{p+2}|_F$, and 
$\widehat{j}^*(TY_\tau)|_{-F}=(-\tilde{\tau}\circ j)^*(T\RR^{p+2})|_{-F}$ has a
spin structure $-\varsigma^{p+2}|_{(-\tilde{\tau}(F))}$. They
disagree along $\alpha\times \{ 1\}\subset \partial X$
(corresponding to $\tau(\alpha)\times\{ 1\}\subset\partial (-X)$).
This implies that $w_2(\widehat{j}^*(TY_\tau))\neq 0\in H^2(K;\ZZ_2)$, and
hence $w_2(Y_\tau)\neq 0\in H^2(Y_\tau;\ZZ_2)$ as it pulls back
giving a nontrivial element of $H^2(K;\ZZ_2)$. This contradicts that $Y_\id$ is homeomorphic
to $Y_\tau$, so $\tau$ has to preserve $\imath^\sharp(\varsigma^{p+2})$.
\end{proof}

\begin{remark}\label{KT-GM} We are aware that the induced spin structure
$\imath^\sharp(\varsigma^{p+2})$ can also be derived from a general 
construction of characteristic pairs (\cite{KT}, cf. also \cite{GM}, \cite{Er}).
Recall that a pair of oriented compact smooth manifolds $(W,M)$ is called \emph{characteristic}
if $M\subset W$ is a proper codimension-$2$ submanifold dual to $w_2(M)$.
The space ${\rm Char}(W,M)$ of \emph{characterizations} of $(W,M)$ consists of 
spin structures on $W\setminus M$ which does not extend across any component of $M$, admitting
a natural free transitive action $H^1(W;\ZZ_2)$. There is a function
$h:{\rm Char}(W,M)\to\mathcal{S}(M)$ equivariant under the natural
actions of $H^1(W;\ZZ_2)$ on ${\rm Char}(W,M)$ and $H^1(M;\ZZ_2)$
on $\mathcal{S}(M)$ via the homomorphism $H^1(W;\ZZ_2)\to H^1(M;\ZZ_2)$, where $\mathcal{S}(M)$ is
the space of spin structures on $M$, 
(\cite[Definition 6.1, Theorem 2.4, Lemma 6.2]{KT}).
When $W=S^{p+2}$ and $M$ is connected, ${\rm Char}(W,M)$ is a single element group whose image
under $h$ coincides with $\imath^\sharp(\varsigma^{p+2})$. This gives an alternative
 proof of Proposition \ref{spin-obstruction-top} 
 if one assumes $\tau$ extends over $S^{p+2}$ diffeomorphically rather than just homeomorphically.
When $p=2$, one can also phrase $\imath^\sharp(\varsigma^{p+2})$ in terms of the Rokhlin quadratic form, see 
Lemma \ref{Rokhlin}.
\end{remark}

Before going to the applications, we mention the following lemma which justifies the well-definedness of $\esg_\cat(\imath)$.

\begin{lemma}\label{extendTogether} Let $\imath:M\hookrightarrow\RR^{p+2}$ be a smooth embedding of an orientable closed $p$-dimensional manifold. 
Let $\tau,\tau':M\to M$ be two $\cat$-isotopic orientation-preserving $\cat$-homeomorphisms, then $\tau$ is $\cat$-extendable if and only if $\tau'$ is $\cat$-extendable over $\imath$.
\end{lemma}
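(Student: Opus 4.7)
The plan is to use the isotopy-extension theorem in each category $\cat$ to transport an extension of $\tau$ into an extension of $\tau'$.

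Suppose $[\tau]=[\tau']$ in $\mcg_\cat(M)$, and let $\tau_t:M\to M$ ($t\in I$) be a $\cat$-isotopy with $\tau_0=\tau$ and $\tau_1=\tau'$. Assume first that $\tau$ is $\cat$-extendable: there exists an orientation-preserving $\cat$-self-homeomorphism $\tilde\tau$ of $\RR^{p+2}$ with $\tilde\tau\circ\imath=\imath\circ\tau$. Define the self-isotopy $\phi_t=\tau_t\circ\tau^{-1}:M\to M$, so that $\phi_0=\id_M$ and $\phi_1=\tau'\circ\tau^{-1}$. Since $\imath$ is a smooth (hence locally flat in any category) embedding of a compact manifold, the $\cat$-isotopy of embeddings $\imath\circ\phi_t:M\hookrightarrow\RR^{p+2}$ fits the hypotheses of the $\cat$-isotopy extension theorem (Cerf--Palais in $\diff$, Hudson in $\pl$, Edwards--Kirby in $\topo$). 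Applying it, we obtain an ambient $\cat$-isotopy $\tilde\phi_t:\RR^{p+2}\to\RR^{p+2}$ with $\tilde\phi_0=\id$ and $\tilde\phi_t\circ\imath=\imath\circ\phi_t$ for all $t\in I$.

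Set $\tilde\tau'=\tilde\phi_1\circ\tilde\tau$. This is a composition of orientation-preserving $\cat$-self-homeomorphisms of $\RR^{p+2}$, hence is itself such. Moreover,
$$\tilde\tau'\circ\imath=\tilde\phi_1\circ\tilde\tau\circ\imath=\tilde\phi_1\circ\imath\circ\tau=\imath\circ\phi_1\circ\tau=\imath\circ(\tau'\circ\tau^{-1})\circ\tau=\imath\circ\tau',$$
so $\tilde\tau'$ extends $\tau'$. The converse implication follows by exchanging the roles of $\tau$ and $\tau'$ (using the reverse isotopy $\tau_{1-t}$).

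The only nontrivial ingredient is the availability of ambient isotopy extension in each category. In $\diff$ and $\pl$ this is classical for compact submanifolds and requires no dimension restriction. In $\topo$ the point is that $\imath(M)$ is locally flat (being smooth), so the Edwards--Kirby topological isotopy extension theorem applies to the ambient $\RR^{p+2}$; this is the one step where some care is required, but it is a standard result and no new argument is needed here.
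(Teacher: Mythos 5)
Your argument is correct in outline, but it takes a genuinely different route from the paper. You invoke the ambient isotopy extension theorem in each category and transport the given extension of $\tau$ by the time-one map $\tilde\phi_1$ of the ambient isotopy; the paper instead first reduces to the case $\tau'=\id$ (by composing extensions, exactly in the spirit of your last step, via an extension of $\tau^{-1}\circ\tau'$) and then constructs the required extension of an isotopically trivial map by hand: inside a tubular neighborhood $\mathcal{N}\cong M\times D^2$ of $\imath(M)$ it sets $\tilde\tau(x,re^{i\theta})=(f_r(x),re^{i\theta})$, which is the identity on $\partial\mathcal{N}$ and hence extends by the identity over the rest of $\RR^{p+2}$. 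The paper's construction is elementary, uniform in $\cat$, and uses no isotopy-extension machinery at all. Your route works, but two points deserve more care than your write-up gives them. First, in codimension two the $\pl$ and $\topo$ isotopy extension theorems need more than local flatness of each level: Hudson's PL theorem requires the isotopy to be locally unknotted, and the topological theorem requires the track $M\times I\to\RR^{p+2}\times I$ to be locally flat, with the ambient-dimension-$4$ case ($p=2$) resting on \cite{FQ} rather than Edwards--Kirby alone (compare the paper's use of \cite{KS} and \cite{FQ} for codimension-two normal structures). These hypotheses do hold in your situation, but for a reason you should state explicitly: every level $\imath\circ\phi_t$ has the same image $\imath(M)$, so the track is the locally flat submanifold $\imath(M)\times I$; your justification only mentions local flatness of $\imath(M)$ itself, which is the hypothesis about a single level, not about the isotopy. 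Second, it is exactly this constant-image feature that lets one bypass the heavy machinery altogether, which is what the paper's tubular-neighborhood trick exploits; your appeal to general isotopy extension buys a more general statement than is needed here, at the price of substantially stronger citations, while the paper's argument stays self-contained.
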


\begin{proof} First assume $\tau'$ is the identity. Take a tubular neighborhood $\mathcal{N}$ of $\imath(M)$ in $\RR^{p+2}$, we have seen that $\mathcal{N}$ is diffeomorphic to $M\times D^2$. As $\tau$ is $\cat$-isotopic to
the identity, say $f_t:M\to M$ where $t\in [0,1]$, we define $\tilde\tau|:M\times D^2\to M\times D^2$ by 
$\tilde\tau(x,re^{i\theta})=(f_r(x),re^{i\theta})$, where $D^2$ is identified as the unit disk of $\CC$. Then $\tilde\tau$ is the identity restricted to $\partial\mathcal{N}\cong M\times \partial D^2$. We may further extend $\tilde\tau$ outside $\mathcal{N}$ over $\RR^{p+2}$ by the identity. This implies $\tau$ is $\cat$-extendable.

In the general case, let $\tau,\tau'$ be two $\cat$-isotopic orientation-preserving $\cat$-homeomorphisms, then $\tau^{-1}\circ\tau'$ is $\cat$-isotopic to the identity and hence $\cat$-extendable. Let $\phi:\RR^{p+2}\to\RR^{p+2}$ be
an orientation-preserving $\cat$-homeomorphic extension of $\tau^{-1}\circ\tau'$. If $\tau$ is $\cat$-extendable, say as $\tilde\tau:\RR^{p+2}\to\RR^{p+2}$, then $\tau'$ may be extended as $\tilde\tau\circ\phi$, and vice versa.
\end{proof}

\section{Embedded surfaces in $\RR^4$}\label{Sec-F_g}

In this section, we prove Theorem \ref{main-extend-F_g}. Note $\mcg_\cat(F_g)$ for $\cat=\diff,\pl,\topo$ are all canonically isomorphic to ${\rm Out}(\pi_1(F_g))$ due to the Dehn-Nielsen-Baer theorem (cf. \cite{Iv}).

Let $\mathcal{S}(F_g)$ be the space of spin structures on a closed
connected oriented surface $F_g$ of genus $g$. There is a surjective map:
$$\mathcal{S}(F_g)\stackrel{[.]}\longrightarrow \Omega^\Spin_2\stackrel\Arf\longrightarrow\ZZ_2,$$
where $\Omega^\Spin_2$ is the second spin cobordism group and $\Arf$ is the Arf isomorphism. More precisely,
for any $\sigma\in\mathcal{S}(F_g)$, there is an associated nonsingular quadratic function $q_\sigma:H_1(F_g;\ZZ_2)\to\ZZ_2$,
such that $q_\sigma(\alpha)=0$ (resp. $1$) if the spin
structure on $F_g$ restricted to the bounding (resp. Lie-group) spin structure on $\alpha$. 
Note $q_\sigma(\alpha+\beta)=q_\sigma(\alpha)+q_\sigma(\beta)+\alpha\cdot\beta$ where $\alpha\cdot\beta$ is the $\ZZ_2$-intersection number, and $\sigma=\sigma'$ if and only if $q_{\sigma}=q_{\sigma'}$. Thus $\Arf([\sigma])$ is defined as the Arf invariant of the nonsingular quadratic form $q_\sigma$. Recall that for a nonsingular quadratic form $q$ on $V\cong\ZZ_2^{\oplus 2g}$, $\Arf(q)$ is $0$ (resp. $1$) if and only if $q$ vanishes on exactly $2^{2g-1}+2^{g-1}$ (resp. $2^{2g-1}-2^{g-1}$) elements, (cf. \cite[Appendix]{Ki}). Correspondingly, $\mathcal{S}(F_g)$ is a disjoint union:
$$\mathcal{S}(F_g)=\mathcal{B}_g\sqcup\mathcal{U}_g,$$
of bounding and unbounding spin structures. We denote the cardinal numbers of $\mathcal{B}_g$, $\mathcal{U}_g$ as $b_g$, $u_g$, respectively.

\begin{lemma}\label{cardBU} For $g\geq 1$, $b_g=2^{2g-1}+2^{g-1}$ and
$u_g=2^{2g-1}-2^{g-1}$.\end{lemma}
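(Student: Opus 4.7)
The plan is to identify $\mathcal{S}(F_g)$ with the set $\mathcal{Q}$ of nonsingular $\ZZ_2$-quadratic refinements of the intersection form on $V=H_1(F_g;\ZZ_2)$ via $\sigma\mapsto q_\sigma$, and then perform a short character-sum computation. By the discussion preceding the lemma, this map is injective, and since both sides have cardinality $2^{2g}$---the left by the affine $H^1(F_g;\ZZ_2)$ structure on spin structures, the right because a refinement is freely determined by its values on a symplectic basis of $V$---it is a bijection. Under this bijection $\mathcal{B}_g$ corresponds to $\{q\in\mathcal{Q}:\Arf(q)=0\}$, so the task reduces to counting refinements of Arf invariant zero.

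Next, I fix a symplectic basis $a_1,b_1,\dots,a_g,b_g$ of $V$, so that a refinement $q$ is encoded by $(x_1,y_1,\dots,x_g,y_g)\in\ZZ_2^{2g}$ with $x_i=q(a_i)$, $y_i=q(b_i)$; a direct check from $q(\alpha+\beta)=q(\alpha)+q(\beta)+\alpha\cdot\beta$ gives $\Arf(q)=\sum_i x_iy_i\pmod 2$. Consequently
$$
b_g-u_g\;=\;\sum_{q\in\mathcal{Q}}(-1)^{\Arf(q)}\;=\;\prod_{i=1}^{g}\Bigl(\sum_{x_i,y_i\in\ZZ_2}(-1)^{x_iy_i}\Bigr)\;=\;2^g,
$$
since each inner sum equals $1+1+1-1=2$. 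Combined with the obvious $b_g+u_g=2^{2g}$, this yields the asserted formulas.

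There is essentially no obstacle beyond this bookkeeping. An alternative route that honors the zero-counting fact recalled just before the lemma is as follows: fix any $q_0\in\mathcal{Q}$ with $\Arf(q_0)=0$ (e.g.\ the refinement vanishing on the chosen symplectic basis), write every $q\in\mathcal{Q}$ uniquely as $q_0+a\cdot(-)$ for $a\in V$ via Poincar\'e duality $H^1(F_g;\ZZ_2)\cong V$, and apply the standard shift formula $\Arf(q_0+a\cdot-)=\Arf(q_0)+q_0(a)$. This identifies $\mathcal{B}_g$ with the zero set of $q_0$, which by the recalled fact has exactly $2^{2g-1}+2^{g-1}$ elements, and the complementary count gives $u_g$.
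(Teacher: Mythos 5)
Your proof is correct, but it follows a genuinely different route from the paper's. The paper argues by induction on the genus: it starts from the base case $b_1=3$, $u_1=1$ (the Lie-group structure is the only unbounding spin structure on $T^2$), and uses the connected sum decomposition $F_{g+1}\cong F_g\,\#\,T^2$ together with additivity of the class in $\Omega^\Spin_2$ to get the recursions $b_{g+1}=3b_g+u_g$ and $u_{g+1}=b_g+3u_g$, whence $b_{g+1}-u_{g+1}=2(b_g-u_g)$ and so $b_g-u_g=2^g$; combining with $b_g+u_g=2^{2g}$ gives the count. You obtain the same key identity $b_g-u_g=2^g$ in one line by a character sum over quadratic refinements after fixing a symplectic basis, which is purely algebraic: it needs neither the topological base case nor connected-sum additivity, only the bijection $\sigma\mapsto q_\sigma$ (which the paper's preceding discussion supplies) and the standard basis formula $\Arf(q)=\sum_i q(a_i)q(b_i)$. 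One caveat: your phrase ``a direct check from $q(\alpha+\beta)=q(\alpha)+q(\beta)+\alpha\cdot\beta$ gives $\Arf(q)=\sum_i x_iy_i$'' is really an appeal to the standard definition of the Arf invariant in a symplectic basis (and its basis-independence, equivalently its compatibility with the vanishing-count characterization the paper recalls from Kirby's appendix), not something derived from the quadratic relation alone; since the paper cites exactly this reference, that is an acceptable input rather than a gap. Your alternative argument via the shift formula $\Arf(q_0+a\cdot(-))=\Arf(q_0)+q_0(a)$ is also valid and is the one that most directly exploits the counting fact quoted just before the lemma. What each approach buys: the paper's induction reuses machinery (behavior of spin structures under connected sum) that it needs again for the transitivity lemma that follows, while your computation is shorter and self-contained at the level of $\ZZ_2$-quadratic forms.
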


\begin{proof} For $g=1$, it is well-known that the only unbounding spin structure on $F_1=T^2$ is the Lie-group spin structure, so $b_1=3$, $u_1=1$. In general, any pair of two spin structures $\sigma_g\in\mathcal{S}(F_g)$, $\delta\in\mathcal{S}(T^2)$ determines a bounding (resp. unbounding) spin structure on $F_{g+1}\cong F_g\,\#\, T^2$ if and only if $\Arf([\sigma_g])$ and $\Arf([\delta])$ have the same (resp. distinct) parity. This implies $b_{g+1}=b_1\times b_g+ u_1\times u_g=3\,b_g+u_g$, and $u_{g+1}=b_1\times u_g + u_1\times b_g= 3\,u_g+b_g$, so $b_{g+1}-u_{g+1}=2\,(b_g-u_g)=\cdots=2^g\,(b_1-u_1)=2^{g+1}$. Using $b_g-u_g=2^g$ and $b_g+u_g=2^{2g}$, we see 
$b_g=2^{2g-1}+2^{g-1}$, $u_g=2^{2g-1}-2^{g-1}$.
\end{proof}

There is a natural action of $\mcg_\cat(F_g)$ on $\mathcal{S}(F_g)$, 
where any $[\tau]\in\mcg_\cat(F_g)$ acts as the pull-back $\tau^*:\mathcal{S}(F_g)\to \mathcal{S}(F_g)$.

\begin{lemma}\label{actionBU}  For $g\geq 1$, $\mcg_\cat(F_g)$ acts invariantly and transitively on
$\mathcal{B}_g$ and $\mathcal{U}_g$.
\end{lemma}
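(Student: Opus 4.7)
The plan is to show the action preserves the Arf invariant (yielding invariance of $\mathcal{B}_g,\mathcal{U}_g$), and then to deduce transitivity by factoring through the classical surjection $\rho:\mcg_\cat(F_g)\to\mathrm{Sp}(2g,\ZZ_2)$ and invoking a standard transitivity statement for the symplectic group on nonsingular $\ZZ_2$-quadratic refinements.

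For invariance, the key observation is that the quadratic refinements transform by $q_{\tau^*\sigma}=q_\sigma\circ\tau_*$ for any $[\tau]\in\mcg_\cat(F_g)$ and $\sigma\in\mathcal{S}(F_g)$, directly from the definition: restricting $\tau^*\sigma$ to a representative loop of $\alpha\in H_1(F_g;\ZZ_2)$ is the same as restricting $\sigma$ to $\tau(\alpha)$. The zero sets $q_{\tau^*\sigma}^{-1}(0)$ and $q_\sigma^{-1}(0)$ are then related by the bijection $\tau_*$ and so have the same cardinality; the counting formula recalled just before Lemma \ref{cardBU} forces $\Arf([\tau^*\sigma])=\Arf([\sigma])$, so $\mathcal{B}_g$ and $\mathcal{U}_g$ are both invariant.

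The same identity $q_{\tau^*\sigma}=q_\sigma\circ\tau_*$ shows the action factors through the symplectic representation $\rho:\mcg_\cat(F_g)\to\mathrm{Sp}(2g,\ZZ_2)$ on $H_1(F_g;\ZZ_2)$ equipped with its (alternating) intersection form. Classically $\rho$ is surjective, for instance because Dehn twists along a standard symplectic basis of simple closed curves realize the transvections that generate $\mathrm{Sp}(2g,\ZZ_2)$. It therefore suffices to prove that $\mathrm{Sp}(2g,\ZZ_2)$ acts transitively on the set of nonsingular $\ZZ_2$-quadratic refinements of the intersection form having any prescribed Arf value $\epsilon\in\ZZ_2$. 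Fixing a symplectic basis $e_1,f_1,\ldots,e_g,f_g$, a refinement $q$ is determined by the $2g$ values $(q(e_i),q(f_i))$, with $\Arf(q)\equiv\sum_i q(e_i)q(f_i)\pmod 2$. An elementary symplectic substitution inside each hyperbolic plane $\langle e_i,f_i\rangle$ forces the pair $(q(e_i),q(f_i))$ into $\{(0,0),(1,1)\}$, since when the pair is $(1,0)$ or $(0,1)$ the plane already contains two nonzero $q$-isotropic vectors forming a symplectic basis; then the standard classification of nonsingular $\ZZ_2$-quadratic forms supplies an explicit symplectic identification of two $(1,1)$-planes with two $(0,0)$-planes, which reduces the number of $(1,1)$-pairs modulo $2$ to either $0$ or $1$ according to $\epsilon$. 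The resulting normal form depends only on $\epsilon$, proving transitivity.

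The main obstacle is this final algebraic normalization, namely the classification of nonsingular quadratic refinements of a symplectic form over $\ZZ_2$; once that standard fact is in hand, both invariance and transitivity of the $\mcg_\cat(F_g)$-action on $\mathcal{B}_g$ and $\mathcal{U}_g$ follow almost mechanically from the transformation rule $q_{\tau^*\sigma}=q_\sigma\circ\tau_*$ and the surjectivity of $\rho$.
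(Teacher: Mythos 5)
Your argument is correct in substance, but it takes a genuinely different route from the paper. For invariance you do exactly what the paper does: use $q_{\tau^*\sigma}=q_\sigma\circ\tau_*$ and the counting characterization of the Arf invariant. For transitivity, however, the paper argues by induction on the genus, using a connected-sum decomposition $F_{g+1}\cong F_g\,\#\,T^2$, additivity of the spin cobordism class, and (in the case where the summands' classes disagree) a change of the splitting realized by a new pair of simple closed curves; the base case $T^2$ is checked by hand with the two Dehn twists. You instead observe that, since $\sigma=\sigma'$ iff $q_\sigma=q_{\sigma'}$, the action on $\mathcal{S}(F_g)$ factors through the symplectic representation $\mcg_\cat(F_g)\to{\rm Sp}(2g,\ZZ_2)$, and you reduce transitivity to the classical normal-form/classification theorem for nonsingular quadratic refinements of the mod $2$ intersection form, together with surjectivity of the symplectic representation. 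This is a perfectly valid and arguably cleaner argument: it isolates the purely algebraic content (Arf--Dickson classification) and avoids the geometric surgery of splittings; the paper's induction, by contrast, is self-contained at the level of spin structures and connected sums and does not need the symplectic classification as an input. Note that the paper itself implicitly endorses your reduction, since the proof of Corollary \ref{noExtension} uses the same descent of the action to a subgroup of $\Aut^+(H_1(F_g;\ZZ_2))$.

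One justification in your write-up is off, though the fact it supports is true: Dehn twists along only a standard symplectic basis of $2g$ curves induce transvections that fix each hyperbolic summand $\langle e_i,f_i\rangle$, so they generate only the block-diagonal copy of ${\rm SL}(2,\ZZ_2)^{\,g}$, not all of ${\rm Sp}(2g,\ZZ_2)$ when $g\geq 2$. Surjectivity of $\mcg_\cat(F_g)\to{\rm Sp}(2g,\ZZ_2)$ is classical, but to see it via twists you need more curves (e.g.\ the Lickorish or Humphries generating curves, or transvections along all mod $2$ primitive classes, each of which is represented by a simple closed curve). With that citation repaired, your proof is complete.
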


\begin{proof} The invariance of the $\mcg_\cat(F_g)$-action on $\mathcal{B}_g$ and $\mathcal{U}_g$ follows immediately from, for example, counting vanishing elements of the associated quadratic forms $q_\sigma, q_{\tau^*(\sigma)}$ for $\sigma\in \mathcal{S}(F_g)$ and $[\tau]\in\mcg_\cat(F_g)$. It suffices to prove the transitivity of the action.
We argue by induction on $g\geq 1$.

When $g=1$, $F_1$ is $T^2\cong S^1_1\times S^1_2$, and $\mcg_\cat(T^2)\cong\SL(2,\ZZ)$ is generated be the Dehn twists $D_1, D_2$ along the first and second factors. It is straightforward to check that $\mcg_\cat(T^2)$ acts transitively on $\mathcal{B}_1$ and $\mathcal{U}_1$.

Suppose for some $g\geq 1$, $\mcg_\cat(F_g)$ acts transitively on $\mathcal{B}_g$ and $\mathcal{U}_g$ for some $g\geq 1$. To see $\mcg_\cat(F_{g+1})$ acts transitively on $\mathcal{B}_{g+1}$, let $\sigma,\sigma'\in\mathcal{B}_{g+1}$. Pick a connect sum decomposition $F_{g+1}\cong F_g\# T^2$, which induces a decomposition $H_1(F_{g+1};\ZZ_2)\cong H_1(F_g;\ZZ_2)\oplus H_1(T^2;\ZZ_2)$. Then $\sigma$ determines spin structures $\sigma_g\in\mathcal{S}(F_g)$ and $\delta\in \mathcal{S}(T^2)$ so that $[\sigma]=[\sigma_g]+[\delta]$ in
$\Omega^\Spin_2$, and similarly $\sigma'$ determines $\sigma'_g$, $\delta'$ so that $[\sigma']=[\sigma'_g]+[\delta']$. If $[\sigma_g]=[\sigma'_g]$, and hence $[\delta]=[\delta']$, then by the induction assumption there are $[\tau_g]\in\mcg_\cat(F_g)$ and $[\phi]\in\mcg_\cat(T^2)$ such that $\tau_g^*(\sigma_g)=\sigma'_g$, $\phi^*(\delta)=\delta'$. Then one finds an element
$[\tau]\in\mcg_\cat(F_{g+1})$, where $\tau=\tau_g\#\phi$, such that $\tau^*([\sigma])=\sigma'$.

Now we consider the case if $[\sigma_g]\neq[\sigma'_g]$, and hence $[\delta]\neq[\delta']$. Thus one of $\delta,\delta'\in\mathcal{S}(T^2)$ is the Lie-group spin structure, and the other is a spin-boundary, but there always exists some nontrivial $[\alpha]\in H_1(T^2;\ZZ_2)$ such that $\delta|_\alpha=\delta'|_\alpha$. For any $[\beta]\in H_1(T^2;\ZZ_2)$ with $\alpha\cdot\beta=1$, that $[\delta]\neq[\delta']$ implies $\delta|_\beta\neq\delta'|_\beta$. On the other hand, there is some nontrivial $[\gamma]\in H_1(F_g;\ZZ_2)$, such that $\sigma_g|_\gamma\neq\sigma'_g|_\gamma$. Let $[\tilde\beta]=[\beta]+[\gamma]\in H_1(F_{g+1};\ZZ_2)$, we have $\alpha\cdot\tilde\beta=1$, and the difference $\sigma-\sigma'\in H^1(F_g;\ZZ_2)$ vanishes on $[\alpha]$ and $[\tilde\beta]$. We may take two simple closed curve representatives $\alpha, \tilde\beta\subset F_{g+1}$ such that $\alpha\cap\tilde\beta$ is a single point. A regular neighborhood of $\alpha\cup\tilde\beta$ on $F_{g+1}$ is a punctured torus $\tilde{T}\setminus *$, which gives another connected sum decomposition of $F_{g+1}=\tilde{F}_g\#\tilde{T}$. It is clear that with respect to this decomposition, the induced spin structures $\tilde\sigma_g,\tilde\sigma'_g\in\mathcal{S}(F_g)$, $\tilde\delta,\tilde\delta'\in\mathcal{S}(F_g)$ satisfy $[\tilde\sigma_g]=[\tilde\sigma'_g]$, $[\tilde\delta]=[\tilde\delta']$, so we apply the previous case to obtain some $[\tilde\tau]\in\mcg_\cat(F_{g+1})$, such that $\tilde\tau^*([\sigma])=\sigma'$. This means $\mcg_\cat(F_{g+1})$ acts transitively on $\mathcal{B}_{g+1}$.

The proof for the transitivity of the $\mcg_\cat(F_{g+1})$-action on $\mathcal{U}_{g+1}$ is similar, so we complete the induction.\end{proof}

\begin{proof}[Proof of Theorem \ref{main-extend-F_g}]
Because $\mcg_\topo(F_g)$ are represented by self-diffeomorphisms, by
Proposition \ref{spin-obstruction-top}, any element in $\esg_\topo(\imath)$
preserves $\imath^\sharp(\varsigma^{4})\in\mathcal{B}_g\subset\mathcal{S}(F_g)$.
On the other hand, $\mcg_\topo(F_g)$ acts transitively on $\mathcal{B}_g$ (Lemma
\ref{actionBU}).
Therefore, $[\mcg_\topo(F_g):\esg_\topo(\imath)]\geq |\mathcal{B}_g|=2^{2g-1}+2^{g-1}$,
(Lemma \ref{cardBU}).
\end{proof}

For a smoothly embedded oriented closed surface $\imath: F_g\hookrightarrow\RR^4$, the Rokhlin quadratic
form $q_\imath:H_1(F_g;\ZZ_2)\to \ZZ_2$ is defined so that for any smoothly embedded
subsurface $P\subset\RR^4$ with $\partial P\subset \imath(F_g)$,
$\mathring{P}\subset \RR^4\setminus\imath(F_g)$ and
 transverse to $\imath(F_g)$ along $\partial P$, $q_\imath([\partial P])$ is the $\bmod\,2$ number
 of points in $P\cap P'$, where $P'$ is a smooth perturbed copy of $P$ so that $\partial P'\subset \imath(F_g)$
 is disjoint parallel to $\partial P$, and that $\mathring{P}'$ is transverse to $\mathring{P}$, (\cite{Ro}).
In dimension $4$, the induced spin structure $\imath^\sharp(\varsigma^4)$ is related to it as follows.

\begin{lemma}\label{Rokhlin} The quadratic form $q_{\imath^\sharp(\varsigma^4)}$ coincides with the Rokhlin form $q_\imath$.\end{lemma}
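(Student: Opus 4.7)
Both $q_{\imath^\sharp(\varsigma^4)}$ and $q_\imath$ are $\ZZ_2$-valued quadratic refinements of the intersection form on $H_1(F_g;\ZZ_2)$, so it suffices to check $q_{\imath^\sharp(\varsigma^4)}(\alpha)=q_\imath(\alpha)$ for a smoothly embedded simple closed curve $\alpha\subset F_g$. Fix such an $\alpha$. I first build a convenient bounding surface: choose a smooth Seifert hypersurface $\Sigma$ of $F_g$, push $\alpha$ slightly into $\Sigma$ along the inward normal $W$ to obtain a parallel copy $\alpha^+$, which is null-homologous in $\RR^4\setminus F_g$ by the argument in the proof of Lemma \ref{independent}; then cap $\alpha^+$ by an embedded oriented surface off $F_g$ and glue on the thin collar annulus between $\alpha$ and $\alpha^+$ to obtain a compact connected smoothly embedded oriented surface $P\subset\RR^4$ with $\partial P=\alpha$, $\mathring P\cap F_g=\emptyset$, and $P\pitchfork F_g=\alpha$. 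In this model the inward normal to $\alpha$ in $P$ is $W$, and $\nu_P|_\alpha$ is spanned by the inward perpendicular $v$ of $\alpha$ in $F_g$ together with $H$, so $(v,H)$ (up to an orientation-fixing sign) is a natural trivialization of $\nu_P|_\alpha$.

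The plan is to express both invariants as the same obstruction $e(\nu_P,(v,H))\in\ZZ_2$, the relative mod-$2$ Euler number. On the Rokhlin side, the pushoff $P'$ along $v$ satisfies $\partial P'=\alpha^+$, and a direct count gives $q_\imath(\alpha)=P\cdot P'\equiv e(\nu_P,(v,H))\pmod 2$, which vanishes exactly when $(v,H)$ extends to a nowhere-zero framing of $\nu_P$ over $P$. On the spin-structure side, the framing $(v,H)$ defines a spin structure $\rho$ on $\nu_P|_\alpha$, and via the splitting $T\RR^4|_P=TP\oplus\nu_P$ together with $\varsigma^4|_P$ it determines a complementary spin structure $\rho^\perp$ on $TP|_\alpha$. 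If $(v,H)$ extends to a framing of $\nu_P\to P$, then $\rho$ extends globally, so $\rho^\perp$ is the restriction of the spin structure on $TP$ compatible with $\varsigma^4|_P$ and the extended $\rho$; passing to $T\alpha$ via the inward-normal framing $W$ of $TP|_\alpha=T\alpha\oplus\langle W\rangle$ then yields the bounding spin structure on $T\alpha$. Otherwise one gets the Lie-group spin structure. Hence this derived spin structure on $T\alpha$ is bounding iff $q_\imath(\alpha)=0$.

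The final step is to identify this derived spin structure on $T\alpha$ with $\imath^\sharp(\varsigma^4)|_\alpha$ extracted through the splitting $TF_g|_\alpha=T\alpha\oplus\langle v\rangle$. Both spin structures on $T\alpha$ arise from $\varsigma^4|_\alpha$ by splitting off a positively oriented rank-$3$ frame of $\nu_\alpha^{\RR^4}$: the $F_g$-side gives $(v,W,H)$ while the $P$-side (after fixing the orientation of $\nu_P$) gives $(W,-v,H)$. The transition $(v,W,H)\mapsto(W,-v,H)$ is pointwise the same element $R\in\SO(3)$ — the $\pi/2$ rotation in the plane $\langle v,W\rangle$ fixing $H$ — so as a map $\alpha\to\SO(3)$ it is constant and hence null-homotopic. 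Therefore the two rank-$3$ frames induce the same spin structure on $\nu_\alpha^{\RR^4}$, and hence the same spin structure on $T\alpha$. The main obstacle is precisely this compatibility check — ensuring that no spurious $\pi_1(\SO(3))=\ZZ_2$ twist appears when changing between the $F_g$- and $P$-splittings — which here reduces to the direct observation that the transition between the two rank-$3$ frames is constant along $\alpha$ thanks to the explicit global model of $P$. Combining the three steps yields $q_{\imath^\sharp(\varsigma^4)}(\alpha)=q_\imath(\alpha)$, as required.
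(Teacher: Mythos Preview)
Your argument is essentially the same as the paper's: both identify $q_\imath(\alpha)$ and $q_{\imath^\sharp(\varsigma^4)}(\alpha)$ with the mod-$2$ relative Euler number $e(\nu_P,(v,H))$ of the normal bundle of a bounding surface $P$, using the boundary frame coming from $W$ and $H$. The paper packages this via a single frame $(U,V,W,H)$ of $T\RR^4|_\alpha$ and the observation that the tangential piece $(U,W)$ never stably extends over $TP$, so the total frame represents $\varsigma^4|_\alpha$ exactly when the normal piece $(V,H)$ \emph{fails} to stably extend; your version unpacks the same dichotomy through $\rho$ and $\rho^\perp$ and then adds an explicit constant-$\SO(3)$ comparison of $(v,W,H)$ with $(W,-v,H)$, which the paper leaves implicit.

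Two small slips to clean up. First, $\partial P'$ is the pushoff of $\alpha$ along $v$ inside $F_g$, not the curve $\alpha^+$ you defined as the pushoff along $W$. Second, the sentence ``vanishes exactly when $(v,H)$ extends to a nowhere-zero framing of $\nu_P$ over $P$'' is not literally correct: the frame $(v,H)$ extends iff $e(\nu_P,(v,H))=0$, whereas what you actually need (and use in the next paragraph) is that the \emph{spin structure} $\rho$ extends, which happens iff $e(\nu_P,(v,H))\equiv 0\pmod 2$. The paper phrases this as ``stably extends,'' and that is the right hypothesis for concluding that $\rho^\perp$ extends over $TP$ and hence that the induced spin structure on $T\alpha$ is bounding. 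With these corrections the proof goes through and matches the paper's.
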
 

\begin{proof} To see this, consider a smoothly embedded surface $P\subset\RR^4$ as in the definition of $q_\imath$. Note that the normal vector field of $\partial P$ in $P$ is parallel to the vector field $W$ as in Section \ref{Sec-inducedSpin}. There is a trivialization defined by a frame field $(U,V,W,H)|_{\partial P}$ such that for any $x\in \partial P$, $U_x\in T(\partial P)|_x$, $V_x\in N_{\imath(F_g)}(\partial P)|_x$, $W_x\in N_{P}(\partial P)|_x$, and $H_x$ is a complementary vector orthogonal to $U_x,V_x,W_x$. Now $(U,V,W,H)\simeq\varsigma^4|_{\partial P}$ if and only if it extends over $T\RR^4|_P$. Since $(U,W)|_{\partial P}$ does not (stably) extend over $TP$, $(U,V,W,H)\simeq\varsigma^4|_{\partial P}$ if and only if $(V,H)|_{\partial P}$ fails to extend (stably) over $N_{\RR^4}(P)$, i.e. $|P\cap P'|$ is odd. In this case, $\imath^\sharp(\varsigma^4)|_{\partial P}$ is by definition given by $(U,V)|_{\partial P}$ which is the Lie-group spin structure. We conclude $q_\imath([\partial P])=1$ if and only if 
$q_{\imath^\sharp(\varsigma^4)}([\partial P])=1$.\end{proof}

It is clear from its definition that the Rokhlin form is invariant under the action of $[\tau]\in\mcg_\cat(F_g)$ if $\tau$ extends diffeomorphically. This would imply $[\mcg_\diff(F_g):\esg_\diff(\imath)]\geq 2^{2g-1}+2^{g-1}$ following Lemmas \ref{Rokhlin}, \ref{cardBU}, \ref{actionBU}. On the other hand, Hirose showed in \cite{Hi} that for an unknotted smooth embedding $\imath:F_g\hookrightarrow\RR^4$, i.e. which bounds a smoothly embedded handlebody, $[\tau]\in\esg_\diff(\imath)$ if and only if $\tau$ preserves the Rokhlin quadratic form, (cf. also \cite{Mo} for $g=1$). Noting that $\esg_\diff(\imath)\leq\esg_\pl(\imath)\leq\esg_\topo(\imath)$
under the natural isomorphism $\mcg_\diff(F_g)\cong\mcg_\pl(F_g)\cong\mcg_\topo(F_g)$, we have
$[\mcg_\cat(F_g):\esg_\cat(\imath)]=2^{2g-1}+2^{g-1}$ for unknotted embeddings.

Corollary \ref{noExtension} is an easy consequence of Lemma \ref{actionBU}:

\begin{proof}[{Proof of Corollary \ref{noExtension}}] Observe that the action of $\mcg_\cat(F_g)$ on $\mathcal{S}(F_g)$ descends to an action of a group $\Gamma<\Aut^+(H_1(F_g;\ZZ_2))$: indeed, if $\tau$ projects to $\id\in\Aut^+(H_1(F_g;\ZZ_2))$, $q_{\tau^*\sigma}([\alpha])=q_\sigma(\tau_*[\alpha])=q_\sigma([\alpha])$, for any $[\alpha]\in H_1(F_g;\ZZ_2)$, so $\tau^*\sigma=\sigma$ for any $\sigma\in\mathcal{S}(F_g)$. $\Gamma$ is a finite group isomorphic to ${\rm Sp}(2g,\ZZ_2)$ as it preserves the $\ZZ_2$-intersection form. Then Lemma \ref{actionBU} implies $\Gamma$ acts transitively on $\mathcal{B}_g$, so for any $\sigma\in\mathcal{B}_g$, ${\rm Stab}_\Gamma(\sigma)<\Gamma$ has index $b_g=2^{2g-1}+2^{g-1}$.
Since $\id\in {\rm Stab}_\Gamma(\sigma)$ for all $\sigma\in\mathcal{B}_g$, the subset:
$$W=\bigcup_{\sigma\in\mathcal{B}_g}{\rm Stab}_\Gamma(\sigma)\subset \Gamma,$$
has at most $b_g(\frac{|\Gamma|}{b_g}-1)+1<|\Gamma|$ elements. Thus for any $[\tau]\in\Gamma\setminus W$, $\tau$ does not fix any $\sigma\in\mathcal{B}_g$. In particular, for any smooth embedding $\imath:F_g\hookrightarrow\RR^4$, $\imath^\sharp(\varsigma)\in\mathcal{B}_g$ will not be invariant under $\tau$. By Proposition \ref{spin-obstruction-top},
$[\tau]\not\in\esg_\topo(\imath)$.
\end{proof}

\section{Embedded $p$-tori in $\RR^{p+2}$}\label{Sec-T^p}

In this section, we prove Theorem \ref{main-extend-T^p} and its corollaries.

Let $T^p$ be the standard $p$-dimensional torus. Fix a parametrization
$T^p=S^1_1\times\cdots\times S^1_p$, where each $S^1_i$ is a copy of the unit circle $S^1\subset\CC$.
We start by some general facts about $\mcg_\cat(T^p)$ and its action on the space $\mathcal{S}(T^p)$
of spin structures on $T^p$.

For any $p\geq 2$, ${\rm Homeo}^+_\cat(T^p)$ has a \emph{modular subgroup} $\Mod(T^p)\cong \SL(p,\ZZ)$ generated by elements represented by the Dehn twists $\tau_{i,j}$ ($1\leq i,j\leq p$, $i\neq j$) along the $i$-th factor in the $S^1_i\times S^1_j$ direction, namely, $\tau_i(u_1,\cdots,u_p)=(u_1,\cdots,u_{j-1},u_iu_j,u_{j+1},\cdots,u_p)$. $\Mod(T^p)$ may identically be regarded as a subgroup of $\mcg_\cat(T^p)$ under the natural quotient $\pi_0:{\rm Homeo}^+_\cat(T^p)\to\mcg_\cat(T^p)$. Thus the action of $\mcg_\cat(T^p)$ on $H_1(T^p;\ZZ)$ induces a splitting sequence of groups:
$$1\to\mathscr{I}_\cat(T^p)\to\mcg_\cat(T^p)\to\SL(p,\ZZ)\to 1,$$
as $\Aut^+(H_1(T^p;\ZZ))\cong\SL(p,\ZZ)$, (which holds trivially for $p=1$ as well). In other words, $\mcg_\cat(T^p)=\mathscr{I}_\cat(T^p)\rtimes\Mod(T^p)$.
It is well-known that $\mcg_\cat(T^2)=\Mod(T^2)$ (cf. \cite{Iv}), and $\mcg_\cat(T^3)=\Mod(T^3)$ follows from general results of Hatcher for Haken $3$-manifolds (\cite{Ha1}, \cite{Ha3}). While the case $p=4$ remains mysterious, for $p\geq 5$, the splitting is known to be nontrivial and $\mcg_\cat(T^p)$ are different for various $\cat$'s. Specifically, a theorem of Hatcher (\cite[Theorem 4.1]{Ha2}, cf. also \cite{HS}) implies $\mathscr{I}_\cat(T^p)$ ($p\geq5$) is an infinitely generated abelian group, which can be regarded as a $\SL(p,\ZZ)$-module with the following decomposition:
$$\mathscr{I}_\diff(T^p)\,\cong\,\mathscr{W}_p\,\oplus\,H^2(T^p;\ZZ_2)\,\oplus\,\bigoplus_{i=1}^p\,H^i(T^p;\Gamma_{i+1}),$$
and $\mathscr{I}_\pl(T^p)\cong\mathscr{W}_p\,\oplus\,H^2(T^p;\ZZ_2)$, $\mathscr{I}_\topo(T^p)\cong\mathscr{W}_p$ as induced by the forgetting quoients. Here $\mathscr{W}_p\cong\ZZ_2[t_1,t_1^{-1},\cdots,t_p,t_p^{-1}]\,/\,\ZZ_2[t_1+t_1^{-1},\cdots,t_p+t_p^{-1}]\cong\ZZ_2^{\oplus\infty}$ has the natural action induced by that of $\SL(p,\ZZ)$ on the monomials, and $\Gamma_i$ is the $i$-th Kervaire-Milnor group of homotopy spheres which is finite abelian, $i\geq0$, and the $\SL(p,\ZZ)$ acts on $H^2(T^p;\ZZ_2)$, $H^i(T^p;\Gamma_{i+1})$ naturally as usual.

As the space of spin structures $\mathcal{S}(T^p)$ is an affine $H^1(T^p;\ZZ_2)$, there is a Lie-group spin structure and $2^p-1$ non-Lie-group spin structures. Denote the subset of non-Lie-group spin structures as $\mathcal{S}^\star(T^p)$.  

The lower bound in Theorem \ref{main-extend-T^p} follows from the lemma below.

\begin{lemma}\label{actionT^p} $\mcg_\cat(T^p)$ fixes the Lie-group
spin structure of $T^p$, and acts transitively on $\mathcal{S}^\star(T^p)$. Hence $[\mcg_\cat(T^p):\esg_\cat(\imath)]\geq 2^p-1$ if $\imath:T^p\hookrightarrow\RR^{p+2}$ induces 
a non-Lie-group spin structure $\imath^\sharp(\varsigma^{p+2})$ on $T^p$.
\end{lemma}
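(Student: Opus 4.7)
My plan is to identify $\mathcal{S}(T^p)$ with the affine space $H^1(T^p;\ZZ_2)\cong \ZZ_2^p$ using the Lie-group spin structure $\sigma_{\mathrm{Lie}}$ as origin, and then reduce the lemma to two claims: (i) every $\tau\in\mcg_\cat(T^p)$ fixes $\sigma_{\mathrm{Lie}}$, and (ii) the modular subgroup $\Mod(T^p)\leq\mcg_\cat(T^p)$ alone acts transitively on $\mathcal{S}^\star(T^p)$. The index bound then follows by orbit-stabilizer from (ii) and Proposition \ref{spin-obstruction-top}, which implies $\esg_\cat(\imath)$ is contained in the stabilizer of $\imath^\sharp(\varsigma^{p+2})\in\mathcal{S}^\star(T^p)$, so its $\mcg_\cat(T^p)$-orbit has size $2^p-1$.

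For (i), I would give a homeomorphism-invariant characterization: $\sigma_{\mathrm{Lie}}$ is the unique spin structure on $T^p$ such that $\sigma|_\gamma$ is the non-bounding spin structure on $S^1$ for every loop $\gamma$ representing a primitive element of $\pi_1(T^p)=\ZZ^p$. For existence, I would first check this on each coordinate circle $S^1_i$ by reading off the left-invariant framing of $TT^p|_{S^1_i}=TS^1_i\oplus\epsilon^{p-1}$, then extend to an arbitrary primitive class $a\in\ZZ^p$ by completing $a$ to a $\ZZ$-basis (possible exactly because $a$ is primitive) and using the corresponding Lie-group automorphism of $T^p$, which preserves $\sigma_{\mathrm{Lie}}$ up to isotopy since constant framings differ by a null-homotopic map into $\GL^+(p,\RR)$. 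For uniqueness, if $\sigma=\sigma_{\mathrm{Lie}}+\phi$ with $\phi\neq 0$ and $\phi(v)=1$ for some $v\in H_1(T^p;\ZZ_2)\setminus\{0\}$, I would lift $v$ to a primitive $\tilde v\in\{0,1\}^p\subset\ZZ^p$ and observe that $\sigma$ restricts to the bounding spin structure on a loop of class $\tilde v$, violating the characterization. Granting the characterization, any $\tau\in\mcg_\cat(T^p)$ induces $\tau_\ast\in\GL(p,\ZZ)$, which preserves primitivity; hence $\tau^\ast\sigma_{\mathrm{Lie}}$ also restricts to the non-bounding spin structure on every primitive loop, forcing $\tau^\ast\sigma_{\mathrm{Lie}}=\sigma_{\mathrm{Lie}}$.

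For (ii), claim (i) makes the identification $\mathcal{S}(T^p)\cong H^1(T^p;\ZZ_2)$ via $\sigma_{\mathrm{Lie}}$ into an $\mcg_\cat(T^p)$-equivariant bijection. The subgroup $\Mod(T^p)\cong\SL(p,\ZZ)$ then acts via the mod-$2$ reduction $\SL(p,\ZZ)\twoheadrightarrow\SL(p,\ZZ_2)=\GL(p,\ZZ_2)$, which is transitive on $\ZZ_2^p\setminus\{0\}$; the case $p=1$ is vacuous since $|\mathcal{S}^\star(T^1)|=1$.

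The main obstacle is claim (i). A priori the Torelli-type subgroup $\mathscr{I}_\cat(T^p)\le\mcg_\cat(T^p)$, which acts trivially on $H_1(T^p;\ZZ)$, could nevertheless translate the affine $\mathcal{S}(T^p)$ by a nontrivial element of $H^1(T^p;\ZZ_2)$; this is particularly worrisome given that $\mathscr{I}_\cat(T^p)$ is infinitely generated for $p\geq 5$ and is mysterious for $p=4$. The intrinsic primitive-loop description bypasses any direct analysis of $\mathscr{I}_\cat(T^p)$ by characterizing $\sigma_{\mathrm{Lie}}$ through a property that is manifestly invariant under every self-homeomorphism of $T^p$.
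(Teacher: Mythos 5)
Your argument is correct, and its skeleton (fix the Lie-group structure, get transitivity on $\mathcal{S}^\star(T^p)$ from the mod-$2$ reduction $\SL(p,\ZZ)\twoheadrightarrow\GL(p,\ZZ_2)$, then orbit--stabilizer plus Proposition \ref{spin-obstruction-top}) matches the paper's; the real difference is how the invariance of $\sigma_{\mathrm{Lie}}$ under all of $\mcg_\cat(T^p)$ is secured. The paper splits the group: for $\Mod(T^p)$ it observes that pulling back the invariant framing by a linear automorphism changes it only by a constant matrix in $\SL(p,\RR)$, hence fixes $\sigma_{\mathrm{Lie}}$, and for $\mathscr{I}_\cat(T^p)$ it notes in one line that the action on $\mathcal{S}(T^p)$ factors through $\Aut^+(H_1(T^p))$, because a spin structure is determined by its restrictions to loops and these depend only on homotopy classes. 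You instead prove a single homeomorphism-invariant characterization --- $\sigma_{\mathrm{Lie}}$ is the unique spin structure restricting to the non-bounding structure on every loop in a primitive class --- which disposes of $\Mod(T^p)$ and $\mathscr{I}_\cat(T^p)$ simultaneously and makes explicit that no knowledge of the (infinitely generated, or for $p=4$ unknown) group $\mathscr{I}_\cat(T^p)$ is needed; note, though, that your existence step (transporting the coordinate-circle computation to an arbitrary primitive class by an $\SL(p,\ZZ)$ automorphism, constant change of framing in $\GL^+(p,\RR)$) is exactly the paper's $\Mod(T^p)$ computation, and your uniqueness step rests on the same ``determined by restrictions to loops, restriction depends only on the class'' fact underlying the paper's one-liner. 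So the two proofs buy essentially the same thing; yours is a bit longer but isolates a reusable, manifestly invariant characterization of the Lie-group spin structure, while the paper's is more economical.
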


\begin{proof} For the standard parametrization $u=(u_1,\cdots,u_p)$ of $T^p=S^1_1\times\cdots\times S^1_p$, the Lie group spin structure $\sigma_0\in\mathcal{S}(T^p)$ is represented by the standard framing $(\frac{\partial}{\partial u_1},\cdots,\frac{\partial}{\partial u_p})$ over $T^p$, so for any $\tau\in\Mod(T^p)$, $\tau^{-1}_*(\frac{\partial}{\partial u_1},\cdots,\frac{\partial}{\partial u_p})|_u=(\frac{\partial}{\partial u_1},\cdots,\frac{\partial}{\partial u_p})|_{\tau(u)}\cdot A$ for the matrix $A\in\SL(p,\ZZ)$ defining $\tau$ for any $u\in T^p$. This means pulling-back by $\tau$ fixes the framing over $T^p$ up to homotopy, so $\tau^*(\sigma_0)=\sigma_0$. On the other hand, $\mathscr{I}_\cat(T^p)$ fixes $\sigma$ since the action of $\mcg_\cat(T^p)$ descends to $\Aut^+(H_1(T^p))\cong\SL(p,\ZZ)$. Thus $\mcg_\cat(T^p)$ fixes $\sigma_0$.

Let $\sigma',\sigma''\in \mathcal{S}^\star(T^p)$, the differences $\sigma'-\sigma_0,\sigma''-\sigma_0\in H^1(T^p;\ZZ_2)\setminus \{0\}$. As $\mcg_\cat(T^p)$ acts transitively on $H^1(T^p;\ZZ_2)\setminus \{0\}$ and fixes $\sigma_0$, there is some $[\tau]\in\mcg_\cat(T^p)$ such that $\tau^*(\sigma')=\sigma''$. Thus $\mcg_\cat(T^p)$ acts transitively on $\mathcal{S}^\star(T^p)$.

Finally, by Proposition \ref{spin-obstruction-top}, $\tau\in\esg_\cat(\imath)$ only if $\tau$ fixes $\imath^\sharp(\varsigma^{p+2})$, so the transitivity implies $[\mcg_\cat(T^p):\esg_\cat(\imath)]\geq |\mathcal{S}^\star(T^p)|=2^p-1$ if $\imath^\sharp(\varsigma^{p+2})\in\mathcal{S}^\star(T^p)$.
\end{proof}

A little more can be said about $\esg_\cat(\imath)$ for general smooth embeddings of $T^p$ into $\RR^{p+2}$.

\begin{lemma}\label{IcapE} For $p\geq 1$, and for any smooth embedding $\imath:T^p\hookrightarrow\RR^{p+2}$, $\mathscr{I}_\cat(T^p)\cap\esg_\cat(\imath)$ has finite index in $\mathscr{I}_\cat(T^p)$. Moreover,
$\mathscr{I}_\topo(T^p)\leq\esg_\topo(\imath)$.
\end{lemma}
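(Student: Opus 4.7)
The plan proceeds in three threads: a spin-invariance reduction common to all categories, the topological containment, and the finite-index deduction for $\cat=\pl$ and $\cat=\diff$ via Hatcher's decomposition.

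First, I would show that $\mathscr{I}_\cat(T^p)$ fixes every spin structure on $T^p$, so that the obstruction of Proposition \ref{spin-obstruction-top} vanishes automatically on the whole of $\mathscr{I}_\cat(T^p)$. The action of $\mcg_\cat(T^p)$ on $\mathcal{S}(T^p)$ is affine with linear part the natural action on $H^1(T^p;\ZZ_2)$. Elements of $\mathscr{I}_\cat(T^p)$ act trivially on $H_1(T^p;\ZZ)$, hence also on $H^1(T^p;\ZZ_2)$, so they act on $\mathcal{S}(T^p)$ purely by translation. Lemma \ref{actionT^p} pins down the Lie-group spin structure $\sigma_0$ under all of $\mcg_\cat(T^p)$, forcing this translation to be trivial. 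Thus the spin invariance criterion poses no restriction on membership in $\esg_\cat(\imath)$ for any class in $\mathscr{I}_\cat(T^p)$.

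For the containment $\mathscr{I}_\topo(T^p) \leq \esg_\topo(\imath)$, given $[\tau] \in \mathscr{I}_\topo(T^p)$ the representative $\tau$ acts trivially on $\pi_1(T^p)\cong\ZZ^p$ and hence is homotopic to the identity, since $T^p$ is a $K(\ZZ^p,1)$. I would then construct the extension $\tilde\tau$ using the tubular neighborhood $\mathcal{N}\cong T^p\times D^2$: take $\tau\times\id_{D^2}$ on $\mathcal{N}$, modify it through a topological isotopy to become the identity near $\partial\mathcal{N}$, and extend by the identity outside. The key input is that $\tau\times\id_{D^2}$ is topologically isotopic to the identity self-homeomorphism of $T^p\times D^2$: the extra codimension-$2$ room should kill the obstructions measured by $\mathscr{W}_p\cong\mathscr{I}_\topo(T^p)$. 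Making this rigorous is the main technical point, and I expect to argue either by a stability principle for topological isotopy classes upon crossing with $\RR^2$, or by geometrically realizing Hatcher's generators of $\mathscr{W}_p$ as compactly supported homeomorphisms that extend trivially.

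Finally, for the finite-index bounds in $\cat=\pl$ and $\cat=\diff$, I would invoke Hatcher's decomposition recalled before the lemma. Under the forgetting epimorphism, the $\mathscr{W}_p$-direct summand of $\mathscr{I}_\cat(T^p)$ maps isomorphically onto $\mathscr{I}_\topo(T^p)$. A $\cat$-version of the previous extension argument should place $\mathscr{W}_p$ into $\esg_\cat(\imath)$, and the quotient $\mathscr{I}_\cat(T^p)/\mathscr{W}_p$ is finite: it is isomorphic to $H^2(T^p;\ZZ_2)$ for $\cat=\pl$ and to $H^2(T^p;\ZZ_2)\oplus\bigoplus_{i=1}^p H^i(T^p;\Gamma_{i+1})$ for $\cat=\diff$, each summand being finite since every Kervaire-Milnor group $\Gamma_{i+1}$ is finite abelian. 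This bounds $[\mathscr{I}_\cat(T^p):\mathscr{I}_\cat(T^p)\cap\esg_\cat(\imath)]$ by the order of this finite quotient, which completes the plan.
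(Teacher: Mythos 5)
There is a genuine gap at exactly the point you flag as the main technical one. Your plan rests on the claim that for $[\tau]\in\mathscr{I}_\topo(T^p)$ the homeomorphism $\tau\times\id_{D^2}$ of $T^p\times D^2$ is topologically isotopic to the identity (``the extra codimension-$2$ room should kill the obstructions in $\mathscr{W}_p$''), but you give no argument, and this is not a quotable stability principle: Hatcher's classes survive at least some stabilizations (e.g.\ $[\tau]\mapsto[\tau\times\id_{S^1}]$ does not kill $\mathscr{W}_p$), and in codimension $2$ one cannot upgrade ``homotopic to the inclusion'' to ``ambiently isotopic'' for free. The paper never needs such an isotopy statement. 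It uses the precise form of Hatcher's input, Remark (4) of \cite[Theorem 4.1]{Ha2}: a representative $\tau$ of a class in $\mathscr{W}_p$ ($p\geq5$) is smoothly \emph{concordant} to the identity via $f=(f_T,f_I):T^p\times[0,1]\to T^p\times[0,1]$, and a concordance already extends $\tau$ over the tubular neighborhood $\mathcal{N}\cong T^p\times D^2$ by spinning it radially, $\tilde\tau(u,\,r\,e^{i\theta})=(f_T(u,r),\,f_I(u,r)\,e^{i\theta})$, which is the identity on $\partial\mathcal{N}$ and hence extends by the identity over $\RR^{p+2}$. Your proposal contains no mechanism for converting concordance-to-identity into an extension; ``modify it through a topological isotopy to become the identity near $\partial\mathcal{N}$'' presupposes exactly the unproved claim. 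Also, your first thread (that $\mathscr{I}_\cat(T^p)$ fixes all spin structures) is beside the point: Proposition \ref{spin-obstruction-top} is only a necessary condition for extendability, so it proves nothing about membership in $\esg_\cat(\imath)$.

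The second gap is $p=4$, which the lemma covers but your plan does not. Hatcher's decomposition that you invoke in the third thread holds only for $p\geq5$; the structure of $\mathscr{I}_\cat(T^4)$ (indeed of $\mcg_\cat(T^4)$) is unknown, so neither $\mathscr{I}_\topo(T^4)\cong\mathscr{W}_4$ nor finiteness of a quotient $\mathscr{I}_\cat(T^4)/\mathscr{W}_4$ is available. The paper handles $p=4$ by stabilizing: it puts $\tau\times\id$ on $T^4\times D^2(\frac12)$, regards the boundary restriction as $[\tau\times\id_{S^1}]\in\mathscr{I}_\cat(T^5)$, and, when this class lies in $\mathscr{W}_5$, uses a $\cat$-concordance on $T^5\times[0,1]\cong T^4\times(D^2\setminus\mathring{D}^2(\frac12))$ to taper to the identity on $\partial\mathcal{N}$. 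This shows that the preimage of $\mathscr{W}_5$ under $[\tau]\mapsto[\tau\times\id_{S^1}]$ lies in $\esg_\cat(\imath)$ and has finite index (since $\mathscr{W}_5$ has finite index in $\mathscr{I}_\cat(T^5)$), and it gives $\mathscr{I}_\topo(T^4)\leq\esg_\topo(\imath)$ because $\mathscr{W}_5=\mathscr{I}_\topo(T^5)$. For $p\geq5$ your finite-index bookkeeping is fine once $\mathscr{W}_p\leq\esg_\cat(\imath)$ is established, but establishing that is precisely the concordance-spinning step your proposal is missing.
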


\begin{proof} Without loss of generality, we may assume $p\geq 4$ as $\mathscr{I}_\cat(T^p)$ is trivial when $p\leq 3$.

First suppose $p\geq 5$. In this case, it suffices to show $\mathscr{W}_p\leq\esg_\diff(\imath)$. Let $[\tau]\in\mathscr{W}_p$ where $\tau$ is a diffeomorphic representative. By Remark (4) of \cite[Theorem 4.1]{Ha2},
$\tau$ is smoothly concordant to $\id$, namely, there is a diffeomorphism $f:T^p\times [0,1]\to T^p\times [0,1]$,
such that $f|_{T^p\times\{0\}}=\tau$, $f|_{T^p\times\{1\}}=\id_{T^p}$. Let $f_T$, $f_I$ be the first
and the second component of $f$, respectively, i.e. such that $f(u,r)=(f_T(u,r),f_I(u,r))$.
Pick be a tubular neighborhood $\mathcal{N}\cong T^p\times D^2$ of $\imath(T^p)$ in $\RR^{p+2}$. Identify $D^2$ as the unit disk of $\CC$, and define $\tilde\tau|:T^p\times D^2\to T^p\times D^2$ by $\tilde\tau(u,\,r\,e^{i\theta})=(f_T(u,r),
\,f_I(u,r)\,e^{i\theta})$. It is clear that $\tilde\tau|$ is an orientation-preserving diffeomorphism which restrict to $T^p\times \partial D^2$ as identity. We may define an orientation-preserving diffeomorphism $\tilde\tau:\RR^4\to \RR^4$ by extending $\tilde\tau|$ as identity outside $\mathcal{N}$, which extends $\tau$. This shows $[\tau]\in\esg_\diff(\imath)$.

For $p=4$, let $[\tau]\in\mathscr{I}_\cat(T^4)$ where $\tau$ is a $\cat$-homeomorphic representative. Pick be a tubular neighborhood $\mathcal{N}\cong T^4\times D^2$ of $\imath(T^4)$ in $\RR^6$. We first define $\tilde\tau:T^4\times D^2(\frac12)\to T^4\times D^2(\frac12)$ as $\tau\times\id_{D^2(\frac12)}$, where $D^2(\frac12)$ is the disk of radius one half. $\tilde\tau|$ restricted to $T^4\times \partial D^2(\frac12)$ may be regarded as an element of $\mathscr{I}_\cat(T^5)$. If it lies in $\mathscr{W}_5$, then there is a $\cat$-concordance $f:T^5\times [0,1]\to T^5\times [0,1]$ between $\tilde\tau|_{T^4\times \partial D^2(\frac12)}$ and the identity obtain by joining a $\cat$-isotopy between $\tilde\tau|_{T^4\times \partial D^2(\frac12)}$ and a diffeomorphic representative $\phi\in[\tau]$ with a smooth concordance between $\phi$ and the identity. As $T^5\times [0,1]\cong T^4\times (D^2\setminus \mathring{D}^2(\frac12))$, we may extend $\tilde\tau|$ using the $\cat$-concordance $f$ over $\mathcal{N}$ such that $\tilde\tau|_{\partial\mathcal{N}}$ is the identity. Further extend $\tilde\tau|$ outside $\mathcal{N}$ by the identity, we see $[\tau]\in\esg_\cat(\imath)$. This means the preimage of $\mathscr{W}_5$ under:
$$\mathscr{I}_\cat(T^4)\to\mathscr{I}_\cat(T^5),$$
defined by $[\tau]\to[\tau\times\id_{S^1}]$, is contained in $\esg_\cat(\imath)$. Since $\mathscr{W}_5$ has finite index in $\mathscr{I}_\cat(T^5)$, we conclude
$\mathscr{I}_\cat(T^4)\cap\esg_\cat(\imath)$ has finite index in $\mathscr{I}_\cat(T^4)$ as well. Moreover,
$\mathscr{I}_\topo(T^4)\leq\esg_\topo(\imath)$ since $\mathscr{W}_5=\mathscr{I}_\topo(T^5)$.
\end{proof}

We proceed to consider unknotted embeddings of $T^p$ into $\RR^{p+2}$. These have been defined and
studied in \cite{DLWY}. We recall the notion and properties enough
for our use here. Regard $S^1$ and $D^2$ as the unit circle and
the unit disk of $\CC$, respectively. The standard basis of $\RR^n$ is
$(\vec{\varepsilon}_1,\cdots,\vec{\varepsilon}_n)$, and the
$m$-subspace spanned by
$(\vec{\varepsilon}_{i_1},\cdots,\vec{\varepsilon}_{i_m})$ will be
written as $\RR^m_{i_1,\cdots,i_m}$, and hence
$\RR^n=\RR^n_{1,\cdots,n}\subset \RR^{n+1}$.

\begin{example}[The standard model]\label{stdmodel}
Let $\imath_0:{\rm pt}=T^0\to\RR^2$ be $\imath_0({\rm pt})=0$ by
convention. Inductively suppose $\imath_{p-1}$ has been constructed
for some $p\geq1$ such that $\imath_{p-1}(T^{p-1})\subset \mathring{D}^p\subset\RR^p_{2,\cdots,p+1}$. Denote
the rotation of $\RR^{p+2}$ on the subspace $\RR^2_{2,p+2}$
 of angle $\arg(u)$ as $\rho_p(u)\in\SO(p+2)$, for any $u\in S^1$, and we may
 define
 $\imath_p:T^p=T^{p-1}\times S^1_p$
 as: $$\imath_p(v,u)=\rho_p(u)(\frac12\cdot\vec{\varepsilon}_2+\frac{1}{4}\cdot\imath_{p-1}(v)).$$
This explicitly describes an embedding of
$T^p=S^1_1\times\cdots\times S^1_p$ into
$\RR^{p+1}_{2,\cdots,p+2}$. In Figure \ref{figStdModel}, the images of 
$\imath_{p-1}$ and $\imath_p$
are schematically presented on the left and the right respectively. One may imagine
$\vec{\varepsilon}_1$ points perpendicularly outward the page. Observe that the image of $T^p$ is
invariant under $\rho_p(u)$.
\end{example}

\begin{figure}[htb]
\centering
\psfrag{a}[]{\scriptsize{$\mathbb{R}^{p-1}_{3,\cdots,{p+1}}$}}
\psfrag{b}[]{\scriptsize{$\vec{\varepsilon}_2$}}
\psfrag{c}[]{\scriptsize{$\vec{\varepsilon}_{p+2}$}}
\psfrag{d}[]{\scriptsize{$\imath_{p-1}(T^{p-1})$}}
\psfrag{e}[]{\scriptsize{$\frac12\cdot\vec{\varepsilon}_2+\frac{1}{4}\cdot$\scriptsize{$\imath_{p-1}(T^{p-1})$}}}
\psfrag{f}[]{\scriptsize{$\imath_p(T^p)$}} 
\psfrag{g}[]{\scriptsize{$S^1_p$}}
\includegraphics[scale=.7]{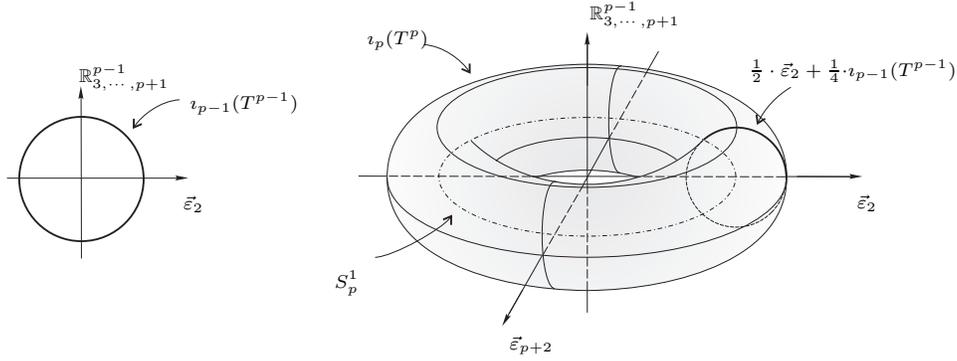}
\caption{The standard model.}\label{figStdModel}
\end{figure}

\begin{definition}\label{ukT} An embedding $\imath:T^p\hookrightarrow \RR^{p+2}$ is
called \emph{unknotted} if there is a diffeomorphism
$g:\RR^{p+2}\to\RR^{p+2}$ such that $\imath$ and $g\circ\imath_p$
have the same image, i.e. $\imath(T^p)=g\circ\imath_p(T^p)$.
\end{definition}

\begin{lemma}\label{notLie} For any unknotted embedding $\imath:T^p\hookrightarrow
\RR^{p+2}$, the induced spin structure $\imath^\sharp(\varsigma^{p+2})$ is not the Lie-group
spin structure on $T^p$.\end{lemma}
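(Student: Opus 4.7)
The plan is to first reduce the general unknotted case to the standard model $\imath_p$, and then attack the standard model using an explicit Seifert hypersurface with a visible compressing disk. For the reduction, note that an unknotted embedding $\imath$ factors as $\imath = g\circ\imath_p\circ\phi$ for some diffeomorphism $g$ of $\RR^{p+2}$ and some self-diffeomorphism $\phi$ of $T^p$ (the factor $\phi$ absorbs how the parametrizations differ, since an embedding is determined by its image only up to reparametrization). Since $\RR^{p+2}$ is simply connected, $g^*\varsigma^{p+2} = \varsigma^{p+2}$, so by naturality of the construction in Definition \ref{inducedSpin} we obtain $\imath^\sharp(\varsigma^{p+2}) = \phi^*\imath_p^\sharp(\varsigma^{p+2})$. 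By Lemma \ref{actionT^p}, $\phi^*$ fixes the Lie-group spin structure, so it permutes $\mathcal{S}^\star(T^p)$ with itself; hence it suffices to prove the statement for $\imath_p$.

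For the standard model, the case $p=1$ is trivial: $\imath_1(S^1)$ is a round circle bounding a flat disk in $\RR^3$, so it induces the bounding spin structure on $S^1$. For $p\geq 2$ I would construct a Seifert hypersurface by rotating a filling of the previous stage. Inductively, the closed bounded region $\overline U_{p-1}\subset\mathring D^p\subset\RR^p_{2,\ldots,p+1}$ cut out by $\imath_{p-1}(T^{p-1})$ is diffeomorphic to $T^{p-2}\times D^2$, with base case $\overline U_1 = D^2$; the inductive step is an immediate consequence of the rotational construction of $\imath_{p-1}$ itself. Setting
$$\Sigma_p = \set{\rho_p(u)\bigl(\tfrac12\vec\varepsilon_2+\tfrac14 y\bigr)}{u\in S^1,\ y\in\overline U_{p-1}},$$
and noting that the $\vec\varepsilon_2$-coordinate of $\tfrac12\vec\varepsilon_2+\tfrac14 y$ remains $\geq\tfrac14>0$ (so $\rho_p$ acts freely on the swept region), gives an embedded smooth Seifert hypersurface for $\imath_p(T^p)$, diffeomorphic to $S^1\times\overline U_{p-1}\cong T^{p-1}\times D^2$.

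Under the identification $\Sigma_p\cong T^{p-1}\times D^2$, a meridional loop $\{\mathrm{pt}\}\times\partial D^2\subset T^{p-1}\times S^1 = \partial\Sigma_p$ bounds the obvious disk $\{\mathrm{pt}\}\times D^2\subset\Sigma_p$, and tracking through the recursive construction shows that this loop is one of the standard circle factors $S^1_i\subset T^p$. Proposition \ref{spin-obstruction-top} exhibits $\imath_p^\sharp(\varsigma^{p+2})$ as the spin boundary of $(\Sigma_p,\sigma_H^\perp)$, so the standard fact that any $S^1$ bounding a disk in a spin manifold inherits the bounding spin structure at the boundary forces $\imath_p^\sharp(\varsigma^{p+2})|_{S^1_i}$ to be bounding. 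On the other hand, the Lie-group spin structure on $T^p$ restricts to the non-bounding Lie-group spin structure on each closed subgroup $S^1_i$. Hence $\imath_p^\sharp(\varsigma^{p+2})$ cannot be the Lie-group spin structure.

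The main anticipated obstacle is the tidy verification that a loop bounding a disk in $\Sigma_p$ picks up the bounding spin structure from $\imath_p^\sharp(\varsigma^{p+2})$; this reduces to tracking the framing decomposition of Section \ref{Sec-spinPrelim}, using that along a compressing disk $D\subset\Sigma_p$ the inward normal $W$ to $T^p=\partial\Sigma_p$ coincides with the inward normal to $\partial D$ inside $D$, so the spin framing on $T\Sigma_p|_D$ splits off a framing of $TD$ extending over $D$. A secondary care is the inductive identification $\overline U_{p-1}\cong T^{p-2}\times D^2$, where one must be consistent about selecting the bounded side of $\imath_{p-1}(T^{p-1})$ inside $\mathring D^p$ at each stage.
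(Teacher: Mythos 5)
Your proof is correct and takes essentially the same route as the paper: both exhibit a Seifert hypersurface diffeomorphic to $T^{p-1}\times D^2$ for the standard model (your rotational sweep is exactly the paper's inductive extension of $\imath_p$ to an embedding of $D^2\times T^{p-1}$), and both conclude because the induced spin structure spin-bounds over this hypersurface, hence restricts to the bounding structure on the meridian circle factor, whereas the Lie-group structure restricts there to the non-bounding one. Your preliminary reduction $\imath=g\circ\imath_p\circ\phi$ is harmless but unnecessary --- the paper just transports the standard filling by $g$ to get a $T^{p-1}\times D^2$ Seifert hypersurface for any unknotted $\imath$ --- and note that $\phi$ need not be orientation-preserving, so invoking Lemma \ref{actionT^p} for $\phi^*$ strictly requires the small extra remark that the Lie-group spin structure is also preserved by orientation-reversing diffeomorphisms.
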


\begin{proof} One can easily see that the standard embedding $\imath_p:T^p=
S^1_1\times\cdots\times S^1_p\hookrightarrow\RR^{p+2}$ can
be extended to an embedding from $D^2\times T^{p-1}=
D^1_1\times S^1_2\times\cdots\times S^1_p$ to $\RR^{p+2}$, for $p\geq 1$,
using an induction argument. Thus $\imath$ also has a Seifert hypersurface
$\Sigma\subset\RR^{p+2}$ diffeomorphic to $D^2\times T^{p-1}$. From the proof
of Proposition \ref{spin-obstruction-top}, $(T^p,\imath^\sharp(\varsigma^{p+2}))$ is
the spin boundary of a spin structure on $\Sigma$. However, the spin structures on
$\Sigma\cong D^2\times T^{p-1}$ are $\varsigma^2\oplus\sigma$, where $\sigma\in\mathcal{S}(T^{p-1})$,
and these induce $\partial\varsigma^2\oplus\sigma$ on $\partial\Sigma
\cong S^1\times T^{p-1}$, which disagree with the
Lie-group spin structure along the loop $S^1\times *$.
\end{proof}

\begin{proof}[{Proof of Theorem \ref{main-extend-T^p}}]
Lemma \ref{actionT^p} proves $[\mcg_\topo(T^p):\esg_\topo(\imath)]\geq 2^p-1$. To see that 
any unknotted embedding $\imath:T^p\hookrightarrow\RR^{p+2}$ realizes the lower bound, note $\mcg_\topo(T^p)=\mathscr{I}_\topo(T^p)\rtimes
\Mod(T^p)$. By Lemma \ref{IcapE}, $\mathscr{I}_\topo(T^p)\leq \esg_\topo(\imath)$. 
On the other hand, \cite[Theorem 1.4]{DLWY} showed $\Mod(T^p)$ (denoted as $\Aut(T^p)$ there)
has a subgroup of index $2^p-1$ which is diffeomorphically extendable. Therefore,
$[\mcg_\topo(T^p):\esg_\topo(\imath)]\leq 2^p-1$, and hence the index is exactly $2^p-1$.
\end{proof}

\begin{proof}[{Proof of Corollary \ref{extend-T^3}}] This follows from Rokhlin's theorem that any closed spin $4$-manifold $X$ has signature $0$ modulo $16$, (cf. \cite[Theorem III.1.1]{Ki}). In fact, if $\Sigma$ is a Seifert hypersurface of $\imath$, the proof of Proposition
\ref{spin-obstruction-top} implies $\Sigma$ has a spin structure inducing $\imath^\sharp(\varsigma^5)$ on 
the boundary $T^3$. If it is the Lie-group spin structure, one can find a compact spin $4$-manifold $N$ of signature
$8\bmod16$ (cf. \cite[Chapter V]{Ki}, also \cite[Proposition 6.1]{SST}) with
$\partial N=T^3$, such that one can glue $\Sigma$ and $N$ along boundary
to obtain a closed spin $4$-manifold $X$. Then ${\rm sig}(\Sigma)+8\equiv{\rm sig}(\Sigma)+{\rm sig}(N)={\rm sig}(X)\equiv0\bmod16$ would imply $\Sigma$ has signature $8\bmod16$, which violates the assumption. Thus from $\imath^\sharp(\varsigma^5)$ is not the Lie-group spin structure on $T^3$, and Theorem \ref{main-extend-T^p} holds in this case. Note also that in this case, any Seifert hypersurface of $\imath$ has signature $0\bmod16$, (cf. for example,
\cite[Proposition 6.1]{SST}).
\end{proof}

To prove Corollary \ref{ukT-DiffPL}, we need an elementary lemma in group theory.

\begin{lemma}\label{group} If $G$ is a subgroup of a semi-direct product of groups $N\rtimes H$, then $[N\rtimes H:G]\,\leq\,[N:N\cap G]\cdot[H:H\cap G]$.\end{lemma}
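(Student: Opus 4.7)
The plan is to use the natural projection $\pi:N\rtimes H\to H$ (with kernel $N$) to factor the index $[N\rtimes H:G]$ into pieces that can be compared with $[N:N\cap G]$ and $[H:H\cap G]$ separately. Concretely, I would form the subgroup $NG\leq N\rtimes H$ and use the multiplicativity of indices to write
\[
[N\rtimes H:G]\;=\;[N\rtimes H:NG]\cdot[NG:G].
\]

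For the second factor, I would apply the second isomorphism theorem: since $N$ is normal in $N\rtimes H$, one has $NG/N\cong G/(N\cap G)$, so $[NG:N]=[G:N\cap G]$, and combining this with $[NG:N]=[NG:G]\cdot[G:N\cap G]/[G:N\cap G]$ — or more directly $[NG:G]=[N:N\cap G]$ — yields the equality $[NG:G]=[N:N\cap G]$.

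For the first factor, I would observe that $\pi$ descends to an isomorphism $(N\rtimes H)/N\cong H$ under which $NG/N$ is identified with $\pi(G)\leq H$, hence $[N\rtimes H:NG]=[H:\pi(G)]$. The key inequality $[H:\pi(G)]\leq[H:H\cap G]$ then follows from the inclusion $H\cap G\subseteq\pi(G)$: indeed, since $H$ sits inside $N\rtimes H$ as the set of elements $(1,h)$ and $\pi$ restricts to the identity on this copy of $H$, any element of $H\cap G$ lies in $\pi(G)$.

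Putting the pieces together gives
\[
[N\rtimes H:G]\;=\;[H:\pi(G)]\cdot[N:N\cap G]\;\leq\;[H:H\cap G]\cdot[N:N\cap G],
\]
which is the desired bound. The only mild subtlety is the identification of $NG/N$ with $\pi(G)$ and the bookkeeping showing $H\cap G\subseteq\pi(G)$; everything else is standard index arithmetic, and no step should present a real obstacle.
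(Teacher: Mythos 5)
Your proof is correct, but it takes a genuinely different route from the paper's. You enlarge $G$ to $NG$ and use the tower $G\leq NG\leq N\rtimes H$, getting the exact identity $[N\rtimes H:G]=[H:\pi(G)]\cdot[N:N\cap G]$ and then losing only in the single step $[H:\pi(G)]\leq[H:H\cap G]$, which comes from $H\cap G\subseteq\pi(G)$. The paper instead shrinks $G$: it forms the subgroup $N'H'=(N\cap G)(H\cap G)$ (a subgroup because $H\cap G$ normalizes $N\cap G$, which is where the semidirect structure is used), computes $[N\rtimes H:N'H']=[N:N\cap G]\cdot[H:H\cap G]$ exactly via the intermediate subgroup $N(H\cap G)$, and gets the inequality from $N'H'\leq G$. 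Your version is slightly sharper in that it identifies $[N\rtimes H:G]$ exactly in terms of $\pi(G)$ and locates the entire loss in $\pi(G)\supseteq H\cap G$; the paper's version exhibits the explicit product subgroup whose index equals the stated bound. Both are legitimate elementary index arguments.

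One blemish: the parenthetical chain ``$[NG:N]=[NG:G]\cdot[G:N\cap G]/[G:N\cap G]$'' is circular as written and proves nothing, and you cannot instead get $[NG:G]=[N:N\cap G]$ by dividing orders, since the groups in the intended application are infinite (indeed of infinite order and, for the torus case, with infinitely generated factors). Fortunately the ``more direct'' claim you fall back on is the right one and is easy to justify: every coset of $G$ in $NG$ has the form $nG$ with $n\in N$, and $n_1G=n_2G$ if and only if $n_1^{-1}n_2\in N\cap G$, so $n(N\cap G)\mapsto nG$ is a bijection from $N/(N\cap G)$ onto $NG/G$, giving $[NG:G]=[N:N\cap G]$ with no finiteness assumption. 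With that substitution your argument is complete.
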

\begin{proof} Let $N'=N\cap G$, $H'=H\cap G$. Clearly $H'$ preserves $N'$ under the conjugation, so the subgroup $N'H'$ is also a semi-direct product. Note $[NH:N'H']=[NH:NH']\cdot[NH':N'H']$. As $N$ is normal in both $NH$ and $NH'$, quotienting out $N$ yields $[NH:NH']=[H:H']$. Because $N\cap N'H'=N'$ as $N'H'$ is a semi-direct product, the map
$N\to NH'/N'H'$ descends to a bijection $N/N'\to NH'/N'H'$ between the cosets, so $[NH':N'H']=[N:N']$. Thus $[NH:G]\leq[NH:N'H']=[N:N']\cdot[H:H']$.
\end{proof}

\begin{proof}[{Proof of Corollary \ref{ukT-DiffPL}}] 
$[\mcg_\cat(T^p):\esg_\cat(\imath)]\geq2^p-1$ follows from Lemmas \ref{actionT^p}, \ref{notLie}. By Lemma \ref{IcapE}, $[\mathscr{I}_\cat(T^p):\mathscr{I}_\cat(T^p)\cap\esg_\cat(\imath)]$ is finite. By \cite[Theorem 1.4]{DLWY}, $[\Mod(T^p):\Mod(T^p)\cap\esg_\cat(\imath)]$ is finite. Therefore, as $\mcg_\cat(T^p)=\mathscr{I}_\cat(T^p)\rtimes \Mod(T^p)$, $[\mcg_\cat(T^p):\esg_\cat(\imath)]$ is also finite by Lemma \ref{group}.
Note clearly $[\mcg_\cat(T^p):\esg_\cat(\imath)]=2^p-1$ when $p\leq3$.
\end{proof}

\bibliographystyle{amsalpha}

\begin{thebibliography}{}
\setlength{\itemsep}{0ex} \addcontentsline{toc}{section}{Reference}

\bibitem[DLWY]{DLWY} F. Ding, Y. Liu, S.~C. Wang, J.~G. Yao, {\it Extending $T^p$
automorphisms over $\RR^{p+2}$ and realizing DE attractors}. preprint. {\tt arXiv:0811.4032}.

\bibitem[Er]{Er} D. Erle,  {\it Quadratische Formen als Invarianten von Einbettungen der
Kodimension $2$}. (German). Topology \textbf{8} (1969), 99--114.

\bibitem[FQ]{FQ} M. Freedman, F. Quinn, {\it Topology of
4-manifolds.} Princeton Mathematical Series 39. Princeton
University Press, 1990.

\bibitem[GM]{GM} L. Guillou, A. Marin,  {\it Une extension d'un th\'{e}or\`{e}me de
Rohlin sur la signature}. (French) [An extension of Rokhlin's
signature theorem] \`{a} la recherche de la topologie perdue, pp. 97--118,
Progr. Math. 62, Birkh\"{a}user Boston, Boston, MA, 1986.

\bibitem[Ha1]{Ha1} A. Hatcher, {\it Homeomorphisms of sufficiently large $P^2$-irreducible
$3$-manifolds}. Topol. \textbf{15} (1976), 343--347.

\bibitem[Ha2]{Ha2} A. Hatcher,  {\it Concordance spaces, higher simple-homotopy theory,
and applications}, Algebraic and geometric topology Part 1, pp.
3--21, Proc. Sympos. Pure Math. XXXII, Amer. Math. Soc.,
Providence, RI, 1978.

\bibitem[Ha3]{Ha3} A. Hatcher, {\it A proof of the Smale conjecture}. Ann. of Math. \textbf{117}
(1983), 553--607.

\bibitem[Hi]{Hi} S. Hirose, {\it On diffeomorphisms
over surfaces trivially embedded in the 4-sphere}. Algebr. Geom.
Topol. \textbf{2} (2002), 791--824.

\bibitem[HS]{HS} W.~C. Hsiang, R.~W. Sharpe, {\it Parametrized surgery and isotopy}.
Pacific J. Math. \textbf{67} (1976), no. 2, 401--459.

\bibitem[Iv]{Iv} N.~V. Ivanov, {\it Mapping class groups}. Handbook of Geometric Topology.
North-Holland, Amsterdam, 2002.

\bibitem[Ki]{Ki}
R.~C. Kirby, \textit{The Topology of 4-Manifolds}, Lecture Notes in
Math. 1374. Springer-Verlag, Berlin, 1990.

\bibitem[KS]{KS}R.~C. Kirby, L. C. Siebenmann,  {\it Normal bundles for codimension $2$
locally flat imbeddings.} (Proc. Conf., Park City, Utah, 1974),
pp. 310--324. Lecture Notes in Math. 438, Springer, Berlin, 1975.

\bibitem[KT]{KT}R.~C. Kirby, L.~R. Taylor,  {\it ${\rm Pin}$ structures on
low-dimensional manifolds.} Geometry of low-dimensional manifolds, Vol. 2
(Durham, 1989), pp. 177--242, Cambridge Univ. Press, Cambridge, 1990.

\bibitem[Mi]{Mi63} J. Milnor, {\it Spin structures on manifolds}, Enseign. Math. (2) \textbf{9}
(1963), 198--203.

\bibitem[MS]{MS} J. Milnor, J. Stasheff, {\it Characteristic classes.} Annals
of Mathematics Studies 76. Princeton University Press, Princeton, NJ;
University of Tokyo Press, Tokyo, 1974.

\bibitem[Mo]{Mo} J.~M. Montesinos, {\it On twins in the four-sphere. I.} Quart. J. Math.
Oxford Ser. (2) \textbf{34} (1983), no. 134, 171--199.

\bibitem[Ro]{Ro} V.~A. Rohlin,  {\it Proof of a conjecture of Gudkov.} Funct. Anal. Appl. \textbf{6}
(1972), 136--138.

\bibitem[SST]{SST} O. Saeki, A. Sc\H{u}cz, M. Takase, {\it Regular
homotopy classes of immersions of 3-manifolds into 5-space.}
Manuscripta Math. \textbf{108} (2002), no. 1, 13--32.

\bibitem[Wu]{Wu} W.~T. Wu, {\it Classes caract\'{e}ristiques et $i$-carr\'{e}s d'une
vari\'{e}t\'{e}.} (French) C. R. Acad. Sci. Paris 230, (1950). 508--511.

\end{thebibliography}

\bigskip
\textsc{Peking University,
Beijing, 100871, P.~R. China.}

\textit{E-mail address:} \texttt{dingfan@math.pku.edu.cn}

\bigskip 
\textsc{University of California,
Berkeley, CA 94720, USA.}

\textit{E-mail address:} \texttt{yliu@math.berkeley.edu}

\bigskip
\textsc{Peking University,
Beijing, 100871, P.~R. China.}

\textit{E-mail address:} \texttt{wangsc@math.pku.edu.cn}

\bigskip 
\textsc{University of California,
Berkeley, CA 94720, USA.}

\textit{E-mail address:} \texttt{jgyao@math.berkeley.edu}

\end{document}